\documentclass{article}

\usepackage{microtype}
\usepackage{verbatim}
\usepackage{graphicx}
\usepackage{rotating}
\usepackage{tikz-cd}
\usepackage{float}

\usepackage{url}
\usepackage{hyperref}

\usepackage{pslatex}

\usepackage{datetime}
\newdateformat{versiondate}{%
\THEMONTH\THEDAY}

\usepackage{xpatch} 

\usepackage{amsthm}
\usepackage[fleqn]{amsmath} % flush equations left
\usepackage{amsfonts}
\usepackage{amssymb}
\usepackage{stmaryrd} % for big square caps/cups

\newtheoremstyle{zoltanstyle}
  {1em} % Space above
  {\topsep} % Space below, usually \topsep
  {} % Body font
  {} % Indent amount
  {\bfseries} % Theorem head font \bfseries
  {.} % Punctuation after theorem head
  {.5em} % Space after theorem head
  {} % Theorem head spec (can be left empty, meaning `normal')

\theoremstyle{zoltanstyle}
\swapnumbers
\xpatchcmd\swappedhead{~}{.~}{}{}
\newtheorem{body}{}
\numberwithin{body}{section}

\newtheorem{corollary}[body]{Corollary}
\newtheorem{definition}[body]{Definition}
\newtheorem{example}[body]{Example}

\newtheorem{lemma}[body]{Lemma}

\newtheorem{proposition}[body]{Proposition}

\newtheorem{theorem}[body]{Theorem}

\expandafter\let\expandafter\oldproof\csname\string\proof\endcsname
\let\oldendproof\endproof

\renewenvironment{proof}[1][\proofname]{%
  \oldproof[\normalfont \bfseries #1.]%
}{\oldendproof}

\newcommand{\vdashkm}{\vdash_{\mathbf{KM}}}
\newcommand{\Hh}{\mathcal{H}}
\newcommand{\SetComp}[2]{\left\{ {#1}\:\middle|\:{#2} \right\}}   % ZFC comprehension
\newcommand{\pita}{\mathrel{\reflectbox{\rotatebox[origin=c]{180}{$\mathbb{A}$}}\!\!}}  % PITts forAll
\newcommand{\pite}{\mathrel{\reflectbox{$\mathbb{E}$}\!\!}}  % PITts Exists

\newcommand{\dera}{\nabla\langle a\rangle}

\title{Two applications of the point-free coderivative}
\author{Zoltan A. Kocsis}
\date{24 September 2026}

\begin{document}

\maketitle

\begin{abstract}
\noindent We present two new applications of Simmons' point-free Cantor-Bendixson coderivative operator in intuitionistic logic. First, we use it to give a simplified proof of the recent result of Xu and Ye that the free Heyting algebra on two generators does not occur as the Heyting algebra of subterminal objects in any elementary topos. Then we use it to prove that complete Heyting algebra semantics is not strongly complete for intuitionistic second-order propositional logic: semantic consequence from an arbitrary set of assumptions does not coincide with ordinary syntactic consequence.
\end{abstract}

\section{Introduction}

In this article, we give two fresh applications of the point-free coderivative operation
$$ \nabla(x) = \bigsqcap_{y \in H} y \sqcup (y \Rightarrow x)$$
on complete Heyting algebras $(H, \sqsubseteq)$, introduced by Simmons~\cite{simmons-cantor} in 1982.

\vspace{0.5em} The first application, presented in Section~\ref{sec:xu-ye-theorem}, is a new, simplified proof of the recent result of Xu and Ye~\cite{xu-ye-failure} that the free Heyting algebra on two generators canont occur as the Heyting algebra of subterminal objects (``truth values'') of any elementary topos. A result of Bellissima embeds the free Heyting algebra on two generators $F_2$ into a complete Heyting algebra $\mathfrak{H}_2$. We call an element of $\mathfrak{H}_2$ \textit{propositionally representable} if it lies in the image of this embedding. We show that the point-free coderivative $\dera$ of a free generator $a \in F_2$ is not propositionally representable, but the internal language of an elementary topos whose algebra of truth values is $F_2$ would have to contain a truth value whose image in $\mathfrak{H}_2$ is $\dera$.

\vspace{0.5em} The second application, presented in Section~\ref{sec:failure-of-completeness}, is a proof that the complete Heyting algebra semantics of intuitionistic second-order propositional logic is not \textit{strongly complete}: semantic consequence from an infinite set of assumptions does not coincide with ordinary syntactic consequence. The semantics interprets the propositional quantifiers $\forall P, \exists P$ as meets and joins over the algebra. This means that the unary connective
$$\Delta(X) := \forall Q. Q \vee (Q \rightarrow X)$$
of second-order propositional logic corresponds to the $\nabla$ operator of Simmons on the algebra. Writing $\vdash_2$ for the syntactic consequence relation of intuitionistic second-order propositional logic, we construct a set of second-order propositional formulas $\mathcal{T}$ involving $\Delta$ so that $\mathcal{T} \vDash \varphi$ but $\mathcal{T} \not\vdash_2 \varphi$, thereby showing that complete Heyting algebra semantics is not strongly complete.

\vspace{0.5em}  Both applications were originally suggested by F{\'e}r{\'e}e and Shillito's recent uniform interpolation theorem~\cite{feree-shillito} for the intuitionistic modal logic of Kuznetsov and Muravitsky~\cite{kuznetsov-km-logic}, whose standard Kripke semantics is itself not strongly complete. At the end of Section~\ref{sec:failure-of-completeness}, we discuss an alternative proof-theoretic version of the second application, which can be obtained directly from the uniform interpolation theorem.

\vspace{0.5em} Complete Heyting algebra semantics has often been claimed to be complete for intuitionistic second-order propositional logic: for example, \textit{Lectures on the Curry-Howard Isomorphism}~\cite{urzyczyn-lectures} states strong completeness while citing Geuvers~\cite{geuvers-semantics}, whose 1994 article states only weak completeness (every formula valid in every complete Heyting algebra model is derivable). In 2009, Zdanowski~\cite{zdanowski-quant} pointed out a serious flaw even in the weak completeness argument: Geuevers assumes that passing to a completion preserves the range of propositional quantification. Our application here concerns strong completeness only. To the best knowledge of the author, the weak completeness question is still unresolved.

\paragraph{Outline.} Section~\ref{sec:notation} fixes the common background and notational conventions, Section~\ref{sec:bellissima} provides an introduction to Bellissima's construction, while Sectionss~\ref{sec:xu-ye-theorem} and \ref{sec:failure-of-completeness} contain the two applications and can be read independently of one another.

\section{Preliminaries}\label{sec:preliminaries}

\begin{body}
In this section we fix the notational conventiuons used throughout the article. After recalling the prerequisites, we introduce complete Heyting algebras and the point-free coderivative, and define complete Heyting algebra semantics for intuitionistic second-order propositional logic.
\end{body}

\subsection{Notation}\label{sec:notation}

\begin{body}
We assume that the reader is familiar with both Heyting-valued and Kripke semantics for intuitionistic propositional logic. For a self-contained presentation of these semantics and their relationships, we refer the reader to \textit{A semantic hierarchy for intuitionistic logic}~\cite{bezhanishvili-hierarchy}. The results of Section~\ref{sec:xu-ye-theorem} require some familiarity with the internal language of topoi as well: as a reference, we recommend Part~III of \textit{Elementary Categories, Elementary Toposes}~\cite{mclarty-topoi}.
\end{body}

\begin{body}
In this article, we use the symbols $\wedge$ / $\vee$ / $\rightarrow$ only in logical syntax (standing for conjunction / disjunction / implication respectively), never as operations defined on other lattices. The relations and operations of Heyting algebras are denoted by variants of the symbols $\sqsubseteq, \sqcap, \sqcup, \Rightarrow$, to avoid clashes with set-theoretic and logical notation.
\end{body}

\begin{body}\label{body:lattice-notation}
Recall that in a Heyting algebra, any one of the binary operations is sufficient to define the partial order relation, and hence the other binary operations:
$$ x \sqsubseteq y \quad\leftrightarrow\quad x \sqcap y = x \quad\leftrightarrow\quad x \sqcup y = y\quad\leftrightarrow\quad x\Rightarrow y=x\Rightarrow x.$$
In light of this, we adopt the following notational convention. We denote Heyting algebras using only their partial order, as $(X, \sqsubseteq_X)$. In any result concerning such a Heyting algebra, we let $\sqcup_X$ denote the least upper bound map definable from $\sqsubseteq_X$. Similarly, $\sqcap_X$ denotes the corresponding greatest lower bound map, $\Rightarrow_X$ the Heyting implication, $\top_X$ the $\sqsubseteq$-maximum element of $X$, and $\bot_X$ to the $\sqsubseteq$-minimum element of $X$. For example, in a result concerning the three Heyting algebras $(X, \sqsubseteq_X)$, $(Y, \sqsubseteq_Y)$ and $(H, \sqsubseteq)$, under our convention
\begin{itemize}
    \item the symbol $\sqcap_X$ would denote the greatest lower bound function $X^2 \rightarrow X$ with respect to the partial order $\sqsubseteq_X$,
    \item the symbol $\bot_Y$ would denote the minimum element of $Y$ with respect to the partial order $\sqsubseteq_Y$,
    \item the symbol $\sqcup$ would denote the least upper bound function $H^2 \rightarrow H$ with respect to the partial order $\sqsubseteq$,
\end{itemize}
and so on. 
\end{body}

\subsection{Complete Heyting algebras}

\begin{definition}\label{def:complete-heyting-algebra}
We call the Heyting algebra $(H, \sqsubseteq)$ a \textit{complete Heyting algebra} if every subset $S \subseteq H$ has a $\sqsubseteq$-least upper bound and $\sqsubseteq$-greatest lower bound. We denote such a least upper bound as $\bigsqcup_{x \in S} x$.
\end{definition}

\begin{example}
The open set lattice $(\Omega T, \subseteq)$ of any topological space $T$, ordered by inclusion, constitutes a complete Heyting algebra.
\end{example}

\subsection{The coderivative operator $\nabla$}

\begin{body}
The \textit{point-free coderivative}, which we denote by $\nabla$ below, was first devised by Simmons~\cite{simmons-cantor} for the study of scattered topological spaces. For an overview of its many uses in topology and logic, see the review article of Litak~\cite{litak-modalities}.
\end{body}

\begin{definition}\label{def:nabla}
Take a complete Heyting algebra $(H, \sqsubseteq)$. We call the map $\nabla : H \rightarrow H$ given by the equality
$$ \nabla(x) = \bigsqcap_{y \in H} y \sqcup (y \Rightarrow x)$$
the \textit{point-free coderivative operator on $(H, \sqsubseteq)$}.
\end{definition}

\begin{body}
The Cantor-Bendixson derivative is one of the oldest operations studied in point-set topology. It associates to each set $S$ its set of accumulation points, ino ther words the set of those $x$ whose every open neighbourhood meets $S \setminus \{x\}$. The coderivative is the corresponding dual operator, which associates to $S$ the set of those $x$ which have an open neighborhood contained within $S \cup \{x\}$. For $T_0$-spaces, the Cantor-Bendixson coderivative coincides with $\nabla$. % nb this is Theorem 22 of Litak's article.
\end{body}

\begin{body}\label{body:alexandroff-construction}
Every preorder $(K, \leq)$ gives rise to an Alexandroff topology on $K$ by defining a set $V \subseteq K$ as open precisely if it is upward-closed ($\forall x. \forall y. x \leq y \rightarrow x \in V \rightarrow y \in V$). We call an upward-closed set an \textit{upset} for short. On such spaces, the point-free coderivative admits a particularly simple characterization, stated as Proposition~\ref{prop:nabla-characterization-upsets} below.
\end{body}

\subsection{Free Heyting algebras}

\begin{body}
Recall that the \textit{free Heyting algebra on two generators}, denoted $F_2$, is the Lindenbaum algebra of intuitionistic propositional logic in two variables. Its elements are equivalence classes of formulas of intuitionistic propositional logic, quotiented by the relation $\varphi_1 \sim \varphi_2$ which holds precisely if $\varphi_1$ and $\varphi_2$ are provably equivalent, i.e.
$$ \vdash \varphi_1 \leftrightarrow \varphi_2$$
holds in intuitionistic logic. The Heyting operations $(\bot, \sqcap, \sqcup, \Rightarrow)$ on $F_2$ are induced by the logical connectives $(\bot, \wedge, \vee, \rightarrow)$. Throughout the article we fix the free generators of $F_2$ as $a,b$, and write $[\varphi] \in F_2$ for the $\sim$-class of the intuitionistic propositional formula $\varphi$ in these two variables.
\end{body}

\begin{body}
A long-standing question in mathematical logic asked whether every Heyting algebra can occur as the algebra of truth values of some elementary topos. Recently, Xu and Ye~\cite{xu-ye-failure} resolved this question in the negative by proving that $F_2$ does not occur as such an algebra of truth values. The subject of Section~\ref{sec:xu-ye-theorem} is a new proof of this result, which replaces the original proof's concrete obstruction term with a more transparent one based on the point-free coderivative operator of Simmons. This simplifies the argument, eliminating a chunk of combinatorial calculations involving mutually recursive sequences, and the corresponding six-component internal encoding and successor operation.
\end{body}

\subsection{Intuitionistic second-order propositional logic}

\begin{body}
Intuitionistic second-order propositional logic \textbf{LJ2} is obtained from intuitionistic propositional logic \textbf{LJ} by adding quantifiers which bind propositional variables. We present \textbf{LJ2} below using a sequent system.
Although the connectives $\wedge,\vee,\bot$ and even the quantifier $\exists$ admit definitions in terms of $\rightarrow$ and $\forall$ in second-order logic (see~\cite{urzyczyn-lectures}),  we choose a presentation which includes the logical connectives $\wedge, \vee, \rightarrow, \bot$ and both second-order quantifiers $\forall, \exists$ as primitives. This ensures that the quantifier-free fragment of our second-order logic corresponds exactly to ordinary intuitionistic propositional logic.  We define the notion of \textit{free and bound occurrences} of variables in formulas, and the \textit{simultaneous substitution} of terms $T_1,\dots,T_n$ for distinct variables $X_1,\dots,X_n$ (denoted $A[T_1/X_1,\dots,T_n/X_n]$) in a capture-avoiding manner in the obvious way.  We introduce the connectives $\top, \neg, \leftrightarrow$ as abbreviations, however: $\top$ stands for $\bot \rightarrow \bot$, $\neg \varphi$ stands for $\varphi \rightarrow \bot$,  and $\varphi \leftrightarrow \psi$ standing for $(\varphi\rightarrow \psi) \wedge (\psi \rightarrow \varphi)$.
\end{body}

\begin{definition}\label{def:lj2-calculus}A \textit{sequent} is an expression $\Gamma \vdash_2 A$ where $A$ is a formula and $\Gamma$ denotes a finite (possibly empty) sequence of formulas, considered up to order. A sequent $\Gamma$ with empty right-hand side denotes $\Gamma \vdash_2 \bot$. We follow the same presentation as Hermant-Lipton~\cite{hermant-lipton} and as the previous works of the author~\cite{kocsis-apal}. \\
\begin{tabular}{lllll}
 \textbf{Identity} & & & \\
 & $\frac{~}{\varphi \vdash_2 \varphi}\text{ax}$ & $\frac{\Gamma \vdash_2 \varphi ~~~~ \Lambda, \varphi \vdash_2 \psi}{\Gamma, \Lambda \vdash_2 \psi}\text{cut}$ \\ \\
 \textbf{Structure} & & & \\
 & $\frac{\Gamma \vdash_2 \psi}{\Gamma, \varphi \vdash_2 \psi}wL$ & $\frac{\Gamma, \varphi, \varphi \vdash_2 \psi}{\Gamma, \varphi \vdash_2 \psi}cL$ & $\frac{\Gamma \vdash_2}{\Gamma \vdash_2 \varphi}wR$ \\ \\
 \textbf{Connectives} & & & \\
 & % Disj
 $\frac{\Gamma, \varphi \vdash_2 \theta ~~~~ \Gamma, \psi \vdash_2 \theta}{\Gamma, \varphi \vee \psi \vdash_2 \theta}\vee L$ &
 $\frac{\Gamma \vdash_2 \varphi}{\Gamma \vdash_2 \varphi \vee \psi}\vee R_1$ &
 $\frac{\Gamma \vdash_2 \psi}{\Gamma \vdash_2 \varphi \vee \psi}\vee R_2$
 \\ \\
 & % Conj
 $\frac{\Gamma \vdash_2 \varphi ~~~~ \Gamma \vdash_2 \psi}{\Gamma \vdash_2 \varphi \wedge \psi}\wedge R$ &
 $\frac{\Gamma, \varphi \vdash_2 \theta}{\Gamma, \varphi \wedge \psi \vdash_2 \theta}\wedge L_1$ &
 $\frac{\Gamma, \psi \vdash_2 \theta}{\Gamma, \varphi \wedge \psi \vdash_2 \theta}\wedge L_2$
 \\ \\
 & % Impl
 $\frac{\Gamma \vdash_2 \varphi ~~~~ \Lambda, \psi \vdash_2 \theta}{\Gamma, \Lambda, \varphi \rightarrow \psi \vdash_2 \theta}\rightarrow L$ &
 $\frac{\Gamma, \varphi \vdash_2 \psi}{\Gamma \vdash_2 \varphi \rightarrow \psi}\rightarrow R$ &
 $\frac{~}{\bot \vdash_2 \varphi}\bot L$
 \\ \\
 \textbf{Quantifiers} & & & \\
 & % Univ
 $\frac{\Gamma, \varphi[T/Y] \vdash_2 \psi}{\Gamma, \forall Y. \varphi \vdash_2 \psi}\forall L$ &
 $\frac{\Gamma \vdash_2 \varphi}{\Gamma \vdash_2 \forall Y. \varphi}\forall R$
 \\ \\
 & % Exis
 $\frac{\Gamma \vdash_2 \varphi[T/Y]}{\Gamma \vdash_2 \exists Y. \varphi}\exists R$ &
 $\frac{\Gamma, \varphi \vdash_2 \psi}{\Gamma, \exists Y. \varphi \vdash_2 \psi}\exists L$
 \\ \\
\end{tabular}
~ \\
with $\Gamma, \Lambda$ standing for arbitrary sequences of formulas, $\varphi,\psi,\theta$ standing for arbitrary formulas, and $Y$ standing for a propositional variable symbol. The rules for the quantifiers satisfy the usual bound variable conditions, for example in the rules $\exists L$ and $\forall R$, the variable $Y$ must not occur free in the context $\Gamma$, and in $\exists L$ it must not occur free in $\psi$ either.
\end{definition}

\begin{definition}
Let $\Gamma$ be a set of second-order propositional formulas, and $\varphi$ one such formula. We say that \textit{$\varphi$ is a syntactic consequence of $\Gamma$}, and write (with some abuse of notation) $\Gamma \vdash_2 \varphi$ if there is a finite sequence of formulas $\Lambda$ so that
\begin{enumerate}
    \item every formula $\psi$ that occurs in $\Lambda$ belongs to the set $\Gamma$, and
    \item there is some derivation tree in the calculus \textbf{LJ2} with conclusion $\Lambda \vdash_2 \varphi$.
\end{enumerate}
\end{definition}

\subsection{Complete Heyting algebra semantics}

\begin{body}
In this section we briefly review the semantics of second-order propositional logic in complete Heyting algebras. Our development closely follows Chapter~12 of Urzyczyn and Sorensen's \textit{Lectures on the Curry-Howard isomorphism}~\cite{urzyczyn-lectures}.
\end{body}

\begin{definition}\label{def:valuation}
A \textit{valuation $v$ on the set $H$} is a map from propositional variable letters to elements of $H$.
\end{definition}

\begin{body}
We will often write $\mathcal{H}$ for a complete Heyting algebra model. When this causes no ambiguity, we assume that $(H, \sqsubseteq)$ denotes the underlying Heyting algebra of the cHA model $\mathcal{H}$. 
\end{body}

\begin{definition}\label{def:cha-interpretation}
Given a complete Heyting algebra $(H,\sqsubseteq)$, we define the interpretation map $\langle-\rangle^H_v$ from \textbf{LJ2}-formulas to elements of $H$, simultaneously on every $H$-valuation $v$, by recursion on the structure of formulas, using the following clauses:
\begin{enumerate}
  \item $\langle X \rangle^H_v := v(X)$ for propositional variable letters $X$,
  \item $\langle \bot \rangle^H_v := \bot_H$,
  \item $\langle \varphi \wedge \psi \rangle^H_v := \langle \varphi \rangle^H_v \sqcap \langle \psi \rangle^H_v$,
  \item $\langle \varphi \vee \psi \rangle^H_v := \langle \varphi \rangle^H_v \sqcup \langle \psi \rangle^H_v$,
  \item $\langle \varphi \rightarrow \psi \rangle^H_v := \langle \varphi \rangle^H_v \Rightarrow \langle \psi \rangle^H_v$,
  \item $\langle \forall Y.\varphi \rangle^H_v := \bigsqcap_{y \in H}\langle \varphi \rangle^H_{v(Y \mapsto y)}$ and
  \item $\langle \exists Y.\varphi \rangle^H_v := \bigsqcup_{y \in H}\langle \varphi \rangle^H_{v(Y \mapsto y)}$,
\end{enumerate}
where $v(Y \mapsto y)$ denotes the valuation $v'$ which satisfies the equations $v'(Y) = y$ and $v'(X) = v(X)$ for all $X \neq Y$. 
\end{definition}

\begin{definition}\label{def:cha-model}
A \textit{complete Heyting algebra model} (or \textit{cHA model} for short) is given by a complete Heyting algebra $(H, \sqsubseteq)$, together with a valuation $v$ on the set $H$. We say that a complete Heyting algebra model \textit{$(H, \sqsubseteq, v)$ models a formula $\varphi$} if $\langle \varphi \rangle^H_v = \top \in H$, and we then write $(H, \sqsubseteq, v) \models \varphi$.  
\end{definition}

\begin{body}
In the remainder of the text, we normally use the symbol $\mathcal{H}$ to denote a complete Heyting algebra model, and unless otherwise noted, denote its underlying set as $H$, its partial order relation as $\sqsubseteq$, and its valuation as $v$.
\end{body}

\begin{definition}\label{def:semantic-validity}
Let $\Gamma$ denote a set of \textbf{LJ2}-formulas, and $\varphi$ denote one such formula. We write $\Gamma \vDash \varphi$, and call \textit{$\varphi$ a semantic consequence of $\Gamma$}, if the following holds: in every complete Heyting algebra model $\mathcal{H}$ so that $\mathcal{H} \models \psi$ for all $\psi \in \Gamma$, we also have $\mathcal{H} \models \varphi$.
\end{definition}

\begin{body}
The soundness of complete Heyting algebra semantics (Proposition~\ref{prop:soundness}) is well-known and was noted by Geuvers~\cite{geuvers-semantics} in 1994.
\end{body}

\begin{proposition}[Soundness]\label{prop:soundness}
Take a set of second-order propositional formulas $\Gamma$, and one such formula $\varphi$. If $\varphi$ is a syntactic consequence of $\Gamma$, then $\varphi$ is a semantic consequence of $\Gamma$ as well. 
\end{proposition}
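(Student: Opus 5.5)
The plan is to prove a stronger, local statement by induction on derivations in \textbf{LJ2}: for every derivable sequent $\Lambda \vdash_2 \psi$, every complete Heyting algebra $(H,\sqsubseteq)$ and every valuation $v$,
$$\textstyle\bigsqcap_{\chi \in \Lambda} \langle \chi \rangle^H_v \sqsubseteq \langle \psi \rangle^H_v,$$
where the empty meet is $\top_H$. Proposition~\ref{prop:soundness} then follows at once. Suppose $\Gamma \vdash_2 \varphi$ is witnessed by a finite $\Lambda$ drawn from $\Gamma$, and $\mathcal{H}$ models every formula of $\Gamma$. Then each $\langle \chi\rangle^H_v = \top$ for $\chi \in \Lambda$, so the left side is $\top$ and hence $\langle\varphi\rangle^H_v = \top$. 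The strengthened claim must quantify over all valuations, because the quantifier rules alter the valuation.

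Before the induction I would prove two lemmas by straightforward induction on formulas.
\begin{itemize}
\item \emph{Coincidence.} If $v$ and $v'$ agree on the free variables of $\varphi$, then $\langle\varphi\rangle^H_v = \langle\varphi\rangle^H_{v'}$.
\item \emph{Substitution.} $\langle \varphi[T/Y]\rangle^H_v = \langle\varphi\rangle^H_{v(Y\mapsto \langle T\rangle^H_v)}$.
\end{itemize}
In the substitution lemma, the quantifier case uses capture-avoidance: the bound variable may be assumed distinct from $Y$ and not free in $T$. One then applies the coincidence lemma to see that $\langle T\rangle^H$ is unchanged when the bound variable is reassigned.

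The induction over rules then consists of routine Heyting algebra facts.
\begin{itemize}
\item Axiom, weakening and contraction hold because meets are idempotent and monotone.
\item $\bot L$ holds because $\bot$ is least; $wR$ holds because an empty right-hand side means $\bot$.
\item Cut holds by monotonicity of $\sqcap$ and transitivity of $\sqsubseteq$.
\item $\wedge R$, $\wedge L_i$ and $\vee R_i$ are immediate.
\item $\vee L$ needs distributivity, $g \sqcap (p\sqcup q) = (g\sqcap p)\sqcup(g\sqcap q)$, which holds in any Heyting algebra.
\item $\rightarrow R$ is the adjunction $g\sqcap p\sqsubseteq q \iff g\sqsubseteq p\Rightarrow q$.
\item $\rightarrow L$ uses modus ponens, $p\sqcap(p\Rightarrow q)\sqsubseteq q$. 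From $g\sqsubseteq p$ and $l\sqcap q\sqsubseteq t$ we get $g\sqcap l\sqcap(p\Rightarrow q)\sqsubseteq l\sqcap q\sqsubseteq t$.
\end{itemize}

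The quantifier rules are the only cases needing care, and I expect them to be the main obstacle.
\begin{itemize}
\item For $\forall L$ and $\exists R$, use the substitution lemma. For example, $\langle\forall Y.\varphi\rangle^H_v \sqsubseteq \langle\varphi\rangle^H_{v(Y\mapsto \langle T\rangle_v)} = \langle\varphi[T/Y]\rangle^H_v$, and the induction hypothesis finishes the case.
\item For $\forall R$, apply the induction hypothesis at every valuation $v(Y\mapsto y)$. Since $Y$ is not free in $\Gamma$, the coincidence lemma shows the left-hand meet is unchanged. Hence the left side lies below every $\langle\varphi\rangle^H_{v(Y\mapsto y)}$, and so below their meet; here completeness of $H$ is used.
\item For $\exists L$, the induction hypothesis at each $v(Y\mapsto y)$ gives $g\sqcap\langle\varphi\rangle^H_{v(Y\mapsto y)}\sqsubseteq \langle\psi\rangle^H_v$, using that $Y$ is free in neither $\Gamma$ nor $\psi$. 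Taking the join over $y$ needs the infinite distributive law $g\sqcap\bigsqcup_y p_y = \bigsqcup_y (g\sqcap p_y)$. This law holds in every complete Heyting algebra because $g\sqcap -$ is a left adjoint (to $g\Rightarrow -$) and therefore preserves all existing joins.
\end{itemize}
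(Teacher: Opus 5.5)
Your proof is correct. The invariant is the valuation-uniform one, $\bigsqcap_{\chi\in\Lambda}\langle\chi\rangle^H_v \sqsubseteq \langle\psi\rangle^H_v$ for every $v$, proved by induction on \textbf{LJ2} derivations; the coincidence and substitution lemmas handle $\forall L$ and $\exists R$ (impredicative instances are harmless because quantifiers range over all of $H$), and completeness with the infinite distributive law handles $\forall R$ and $\exists L$. The paper gives no proof of its own and only cites the result as well known from Geuvers~\cite{geuvers-semantics}, and your argument is the standard one behind that citation.
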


\begin{body}
Throughout this text, we will use the unqualified term \textit{completeness} only for the assertion that for any set of second-order propositional formulas $\Gamma$, and one such formula $\varphi$, if $\varphi$ is a semantic consequence of $\Gamma$, then $\varphi$ is also a syntactic consequence of $\Gamma$. This is often referred to as \textit{strong completeness}, to distinguish it from the (generally weaker) statement, \textit{weak completeness}, that every formula valid in every model is derivable. See Section~\ref{sec:failure-of-completeness} for a more thorough background on completeness.
\end{body}

\begin{body}
Note that Chapter~12 of \textit{Lectures on the Curry-Howard isomorphism}~\cite{urzyczyn-lectures} states strong completeness of complete Heyting algebra semantics for second-order intuitionistic propositional logic, omitting the proof and citing Geuvers~\cite{geuvers-semantics}. However, Geuvers claims to prove only weak completeness in the cited article. Moreover, in later work, Zdanowski~\cite{zdanowski-quant} pointed out that the argument for weak completeness offered there contains a serious flaw. In Section~\ref{sec:failure-of-completeness} we show that strong completeness in fact fails for this semantics. As far as the author knows, the question of weak completeness has not been resolved, and weak completeness for complete Heyting algebra semantics remains unknown.
\end{body}

%%%%%%%%%%%%%%%%%%%%%%%%%%%%%%%%%%%%%%%%%%%%%%%%%%%%%%%%%%%%%%%%%%%%%%%%%%%%%%%%
\section{Bellissima's algebra}\label{sec:bellissima}

\begin{body}
Both of our applications make use of Bellissima's universal Kripke model~\cite{bellissima-finite} $K_2$ and the corresponding Heyting algebra of upsets $\mathfrak{H}_2$. Below, we briefly recall Bellissima's construction and relevant results about it, and clarify the relationship between $F_2$ and $\mathfrak{H}_2$ by defining the notion of propositionally representable elements.
\end{body}

\begin{definition}\label{def:bel-valuation}
A \textit{Bellissima valuation} is a subset of $\{a,b\}$.
\end{definition}

\begin{definition}\label{def:bel-model}
We define the \textit{set of Bellissima worlds at stage $n$}, denoted $K_{2,n}$ inductively on the stage $n$. Set
\begin{align*}
  & K_{2,-1} = \emptyset, & \\
  & K_{2,0} = \left\{  (\{a,b\}, \emptyset), (\{a\}, \emptyset), (\{b\}, \emptyset), (\emptyset, \emptyset)   \right\} &
\end{align*}
and for $n \geq 0$, let $K_{2,n+1}$ consist of the union of $K_{2,n}$ with the set of all pairs $(\beta, Y)$ where $\beta$ is a Bellissima valuation and $Y$ is a subset of $K_{2,n}$ so that $Y \setminus K_{2,n-1}$ is inhabited, and the following conditions hold for all $(\beta',Y') \in Y$:
\begin{enumerate}
  \item $Y' \subseteq Y$,
  \item $\beta \subseteq \beta'$,
  \item if $Y = Y' \cup \{(\beta', Y')\}$ then $\beta \subset \beta'$.
\end{enumerate}
We define the \textit{Bellissima model} $(K_2, \leq,\nu)$ to have as underlying set the union
$$K_2 = \bigcup_{n \in \mathbb{N}} K_{2,n}$$ with strict order relation $<$ given by
$$ (\beta, Y) < (\gamma, Z)  :\equiv (\gamma, Z) \in Y, $$
$\leq$ given by the reflexive closure of $<$, and valuation function given by $\nu(\beta,Y) = \beta$.
\end{definition}

\begin{body}
Notice that, having defined $\leq$, the three conditions of Definition~\ref{def:bel-model} guarantee that $\leq$ is transitive, that $\nu$ is monotone with respect to $\leq$, and that $K_2$ is reduced, i.e.~no two worlds have the same theory, respectively. 
\end{body}

\begin{proposition}\label{prop:bel-finite}
For each $n \in \mathbb{N}$, $K_{2,n}$ consists of finitely many elements. Moreover, the upset of any element $x \in K_2$ is finite.
\begin{proof}
Argue by induction. For the base case, we know that $K_{2,0}$ is finite. For the inductive case, if $K_{2,k}$ is finite then there are only finitely many ways to choose a subset $Y\subseteq K_{2,k}$, so $K_{2,k+1} \setminus K_{2,k}$ consists of at most finitely many elements. This proves that for each $n \in \mathbb{N}$, $K_{2,n}$ is finite. Since any $(\beta, V) \in K_2$ belongs to $K_{2,n+1}$ for some $n$, we get that $V \subseteq K_{2,n}$ is finite, and then $\uparrow (\beta, V) = V \cup \{ (\beta,V) \}$ is finite as required.
\end{proof}
\end{proposition}

\begin{definition}\label{def:bel-algebra}
The \textit{Bellissima algebra} $(\mathfrak{H}_2, \sqsubseteq)$ is the complete Heyting algebra formed by the upward-closed sets of $(K_2, \leq)$, ordered by inclusion. We define the map $\langle - \rangle : F_2 \rightarrow \mathfrak{H}_2$ over the class $[\varphi] \in F_2$ of the formula $\varphi$ as
$$ \langle [\varphi] \rangle := \SetComp{w \in K_2}{w \Vdash \varphi}.$$
The map is well-defined by soundness of Kripke semantics for intuitionistic propositional logic. We call elements of the form $\langle [\varphi] \rangle \in \mathfrak{H}_2$ \textit{propositionally representable}. 
\end{definition}

\begin{theorem}[Theorem~2.4~of~\cite{bellissima-finite}]\label{thm:bel-embedding}
The function $\langle - \rangle : F_2 \hookrightarrow \mathfrak{H}_2$ of Definition~\ref{def:bel-algebra} is a monomorphism of Heyting algebras.
\end{theorem}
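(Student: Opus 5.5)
Two things need proof: that $\langle - \rangle$ is a Heyting algebra homomorphism, and that it is injective.

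\textbf{Homomorphism.} This part is routine. By the Kripke forcing clauses, $\langle[\bot]\rangle = \emptyset = \bot_{\mathfrak{H}_2}$. The clauses for $\wedge$ and $\vee$ say that forcing a conjunction or disjunction means forcing both or one of the components. Hence $\langle[\varphi\wedge\psi]\rangle = \langle[\varphi]\rangle \cap \langle[\psi]\rangle$, and similarly $\langle[\varphi\vee\psi]\rangle$ is the union. For implication, $w \Vdash \varphi\rightarrow\psi$ holds precisely if every $u \geq w$ forcing $\varphi$ also forces $\psi$. Implication in the upset algebra of $(K_2,\leq)$ is computed by the same rule, $U \Rightarrow V = \{w \mid {\uparrow} w \cap U \subseteq V\}$. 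So the map preserves $\Rightarrow$, and it is well defined by soundness.

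\textbf{Injectivity.} This is the real content, and it reduces to a completeness statement for the single model $K_2$. Since $\langle-\rangle$ preserves $\Rightarrow$ and $\top$, it suffices to show the following: if $\langle[\varphi]\rangle = K_2$, then $\vdash \varphi$. Indeed, if $\langle[\varphi]\rangle = \langle[\psi]\rangle$, then $\langle[\varphi\leftrightarrow\psi]\rangle = K_2$, which gives $[\varphi] = [\psi]$.

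So suppose $\not\vdash\varphi$ in two variables. By the finite model property, there is a finite rooted Kripke model $(W,\leq_W,V)$ over $\{a,b\}$ whose root $r$ does not force $\varphi$. I would first pass to its reduction: quotient by the relation of having the same theory, or equivalently take the bisimulation quotient. This preserves forcing and yields a finite rooted model in which no two worlds have the same theory; we may also assume there are no ``duplicate'' successors.

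The key step is to construct, by induction on depth, an embedding of this reduced finite model onto an upset of $K_2$ that preserves forcing. I would do this with a p-morphism $f : W \to K_2$. Each world of depth $0$ (a maximal world) is sent to $(V(w),\emptyset)$. A world $w$ is sent to $(V(w), Y)$, where $Y = \bigcup_{u>w} {\uparrow} f(u)$. We must check that this pair actually lies in $K_2$, by verifying conditions 1--3 of Definition~\ref{def:bel-model}:
\begin{itemize}
\item Condition 1 holds because $Y$ is a union of upsets.
\item Condition 2 follows from monotonicity of $V$.
\item Condition 3 follows from reducedness. If $Y = Y' \cup \{(\beta',Y')\}$ with $\beta = \beta'$, then $w$ would have the same theory as its unique immediate-successor image, contradicting the assumption that $W$ is reduced (modulo identifications already made).
\end{itemize}
The inhabitedness condition on $Y \setminus K_{2,n-1}$ holds by choosing the stage to be the depth of $w$.

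Once $f$ is constructed, we verify that it is a p-morphism: it is monotone, and its image of ${\uparrow}w$ is exactly ${\uparrow}f(w)$. Since p-morphisms preserve forcing, $f(r) \not\Vdash \varphi$, so $\langle[\varphi]\rangle \neq K_2$, as required.

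\textbf{Main obstacle.} The hardest step is checking condition 3 and the identification of $f$ with a legitimate world. The subtle case is when $w$ and a successor have equal valuations and $w$ has exactly one immediate successor: this is precisely the configuration the reduction must have eliminated. I would handle it by defining $f$ on the already-reduced model. In that model, such a $w$ would force exactly the same formulas as its successor (by the forcing clauses at finite depth), so the two worlds would already have been identified.
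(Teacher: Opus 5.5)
The paper gives no proof of this statement; it is imported from Bellissima, so there is no in-paper argument to compare against. Your proof is the standard one for this universality result, and it is correct:
\begin{itemize}
\item the homomorphism property follows from the forcing clauses;
\item injectivity follows from the finite model property, passage to a reduced finite rooted model, and a depth-by-depth p-morphism into $K_2$, with condition~3 secured by reducedness.
\end{itemize}

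One bookkeeping point needs to be explicit. Your check of condition~3 already uses that $f$ preserves forcing strictly above $w$. What you actually know is that $Y_w = {\uparrow} f(u)$ for some $u > w$. To conclude that $w$ and $u$ have the same theory (equivalently, that $u$ is the unique immediate successor of $w$), you need $\mathrm{Th}(x) = \mathrm{Th}(f(x))$ for all $x > w$. So the p-morphism property should be carried in the induction hypothesis, not verified only after $f$ is built.
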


\begin{theorem}[Corollary~2.8~of~\cite{bellissima-finite}]\label{thm:bellissima-principal-formula}
For any $w \in K_2$, one can find a formula $\psi_w$ of intuitionistic propositional logic in two free variables, so that $\langle [\psi_w] \rangle = \:\uparrow w$ in $\mathfrak{H}_2$.
\end{theorem}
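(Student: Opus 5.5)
This is Bellissima's Corollary~2.8: for every $w \in K_2$ there is a formula $\psi_w$ in $a,b$ whose set of forcing worlds is exactly $\uparrow w$. I would prove it by strong induction on the stage $n$ at which $w=(\beta,Y)$ first appears in $K_{2,n}$. Alongside $\psi_w$ I would build a second formula $\chi_w$ with $\langle[\chi_w]\rangle = K_2 \setminus {\downarrow} w$, the set of worlds not below $w$. The pair $(\psi_w,\chi_w)$ is the usual device for de Jongh/Bellissima formulas.

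For $w$ at stage $0$, $w$ is a maximal world with valuation $\beta$. I would take
$$\psi_w := \bigwedge_{p\in\beta} p \wedge \bigwedge_{q\notin\beta}\neg q .$$
This is forced exactly at $w$ and at nothing else, because $K_2$ is reduced and every world lies below some stage-$0$ world with its valuation extended. I would then set $\chi_w := \psi_w \rightarrow \bigvee_{q\notin\beta} q$, with an adjusted formula when $\beta=\{a,b\}$. To check this, note that a world $u$ forces $\chi_w$ iff no $u'\geq u$ forces $\psi_w$, iff $w\notin{\uparrow}u$, iff $u\notin{\downarrow}w$.

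For the inductive step, let $w=(\beta,Y)$. By hypothesis the formulas $\psi_{w'}$ and $\chi_{w'}$ are available for all $w'\in Y$, since these lie at earlier stages. Let $\mathrm{nb}(\beta)$ denote the disjunction of the variables not in $\beta$ (with $\bot$ when there are none). I would define
$$\psi_w := \bigwedge_{p\in\beta}p \wedge \Big(\big(\mathrm{nb}(\beta)\rightarrow \bigvee_{w'\in Y_{\min}}\psi_{w'}\big)\rightarrow \bigvee_{w'\in Y_{\min}}\psi_{w'}\Big)\wedge\bigwedge_{w''\in K_{2,n-1}\setminus Y}\chi_{w''},$$
where $Y_{\min}$ is the set of immediate successors of $w$. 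The last conjunction is finite by Proposition~\ref{prop:bel-finite}. Its role is to forbid successors outside $Y$.

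The verification that a world $u$ forces $\psi_w$ iff $u\geq w$ has two directions.

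\emph{If $u \geq w$}: monotonicity of the valuation and the closure conditions on $Y$ give each conjunct.

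\emph{If $u \Vdash \psi_w$}: the third conjunct shows that every successor of $u$ lying in earlier stages is in $Y$, so ${\uparrow}u\setminus\{u\}\subseteq {\uparrow}Y$. The middle conjunct then forces $u$ to see all of $Y_{\min}$, or else to equal one of them. Condition~3 of Definition~\ref{def:bel-model} and reducedness finally identify $u$ with $w$, or with a world above it.

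I expect this converse direction to be the main obstacle. It needs a careful case analysis using condition~3, since the case $Y=Y'\cup\{(\beta',Y')\}$ is exactly the situation where $u$ and $w$ see the same proper successors and must be told apart by valuations. If this becomes unwieldy, I would fall back on citing Bellissima directly, as the statement does.

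Finally, $\chi_w := \psi_w\rightarrow\bigvee_{w'\in Y_{\min}}\psi_{w'}$, or its stage-$0$ variant. It is checked to define $K_2\setminus{\downarrow}w$ by the same argument as in the base case.
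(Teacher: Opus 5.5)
You give a construction where the paper has none: it only cites Bellissima, so your argument has to stand on its own. It breaks at the step you treat as routine, because $w$ does not force your $\psi_w$. Put $\Psi:=\bigvee_{w'\in Y_{\min}}\psi_{w'}$, so that inductively $\langle[\Psi]\rangle=Y$.

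At $w=(\beta,Y)$ the formula $\mathrm{nb}(\beta)$ is false, being a disjunction of atoms outside $\nu(w)=\beta$. Every strict successor of $w$ lies in $Y$ and forces $\Psi$. Hence $w\Vdash\mathrm{nb}(\beta)\rightarrow\Psi$, while $w\not\Vdash\Psi$ since $w\notin Y$. So $w\not\Vdash(\mathrm{nb}(\beta)\rightarrow\Psi)\rightarrow\Psi$, and this happens at every world of stage $\geq 1$. For example, at $v_2=(\{a\},\{v_0\})$ the middle conjunct is $(b\rightarrow a\wedge b)\rightarrow a\wedge b$, i.e.\ $(b\rightarrow a)\rightarrow a\wedge b$, which fails at $v_2$.

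The converse sketch cannot be rescued with this shape either. The information $u\leq w'$ is exactly $u\not\Vdash\chi_{w'}$, so a conjunct in which the $\chi_{w'}$ ($w'\in Y_{\min}$) do not occur cannot force $u$ to see all of $Y_{\min}$. Also, passing from the third conjunct to ${\uparrow}u\setminus\{u\}\subseteq Y$ presupposes that $u$ has stage at most $n$, which is part of what must be proved.

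The repair is the de Jongh shape, with the downset-complement formulas in the antecedent and no third conjunct:
$\psi_w:=\bigwedge_{p\in\beta}p\wedge\big((\mathrm{nb}(\beta)\vee\bigvee_{w'\in Y_{\min}}\chi_{w'})\rightarrow\Psi\big)$.
Your base case is the instance $Y_{\min}=\emptyset$. Now $w$ forces the implication vacuously, since it forces neither $\mathrm{nb}(\beta)$ nor any $\chi_{w'}$ (as $w\leq w'$).

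Conversely, suppose $u\Vdash\psi_w$ and $u\notin{\uparrow}w$. Choose $v$ maximal in ${\uparrow}u\setminus{\uparrow}w$, which is finite by Proposition~\ref{prop:bel-finite}. Then $v\Vdash\psi_w$ but $v\not\Vdash\Psi$, so $v$ forces neither $\mathrm{nb}(\beta)$ nor any $\chi_{w'}$. Thus $\nu(v)=\beta$, and $Y\subseteq{\uparrow}v\setminus\{v\}\subseteq{\uparrow}w=Y\cup\{w\}$, the second inclusion by maximality. Worlds of $K_2$ are literally pairs (valuation, strict upset), so either $v=(\beta,Y)=w$, contradicting $v\notin{\uparrow}w$, or $v=(\beta,Y\cup\{w\})$, which condition~3 of Definition~\ref{def:bel-model} forbids. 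With this $\psi_w$, your $\chi_w:=\psi_w\rightarrow\Psi$ is correct as stated.
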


\subsection{The $\nabla$ operator in the Bellissima algebra}

\begin{body}
In Proposition~\ref{prop:nabla-characterization-upsets} we prove that the coderivative admits a characterization in terms of the strict order $<$ on $K_2$. Note that this is just an instance of the topological characterization of the coderivative, specialized to Alexandroff spaces. As such, it holds even if $\mathfrak{H}_2$ is replaced with the Heyting algebra of upsets of any other partial order.
\end{body}

\begin{proposition}\label{prop:nabla-characterization-upsets}
The equality
$$\nabla (X) = \SetComp{w \in K_2}{\forall v > w. v \in X}$$
obtains for all $X \in \mathfrak{H}_2$.
\begin{proof}
First assume that $\forall v > w. v \in X$. We will prove that $w \in Q \sqcup (Q \Rightarrow X)$ holds for an arbitrary $Q \in \mathfrak{H}_2$. If $w \in Q$, then we are done. Otherwise, take any $x \geq w$ and assume $x \in Q$. Then $x \neq w$ since $w \not\in Q$, so $x > w$. The first assumption immediately lets us conclude that $x \in X$. This proves $\forall x \geq w. x \in Q \rightarrow x \in X$, the forcing condition for $Q \Rightarrow X$.

\vspace{0.5em} We prove the other inclusion by contrapositive. Assume $\exists v > w. v \not\in X$. We need to prove that $w \not\in Q \sqcup (Q \Rightarrow X)$ for some $Q \in \mathfrak{H}_2$. Take simply $Q = \uparrow v$. Then $w \not\in Q$ since $v > w$, so the first disjunct fails. The second disjunct fails too: if $w \in Q \Rightarrow X$, then for all $x \geq w$ with $x \in Q$, we have $x \in X$. Taking $x$ as $v$, we get a contradiction: $v \in Q$ and by assumption $v\not\in X$.
\end{proof}
\end{proposition}

\subsection{Rieger-Nishimura ladders in $K_2$}

\begin{body}
The Rieger-Nishimura ladder is the universal Kripke model for intuitionistic propositional logic in one variable. Its dual forms the Rieger-Nishimura lattice, the free Heyting algebra on one generator. Below, we fix a copy of the Rieger-Nishimura ladder inside $K_2$, which we use as a source of counterexamples in Sections~\ref{sec:xu-ye-theorem}~and~\ref{sec:failure-of-completeness}.
\end{body}

\begin{definition}\label{def:bel-rieger-nishimura}
We define a copy of the Rieger-Nishimura ladder in $K_2$ inductively as follows:
\begin{align*}
    & v_0 = (\{a,b\}, \emptyset), & \\
    & v_1 = (\{a\}, \emptyset), & \\
    & v_{n+2} = (\{a\}, \{ v_0, \dots, v_n\}). &
\end{align*}
A short induction argument shows that the conditions of Definition~\ref{def:bel-model} are satisfied, so $v_i$ belong to $K_2$. Note that $\{a\} \subseteq \nu(v_i)$, and thus $v_i \in \langle a \rangle$ for all $i \in \mathbb{N}$. Algebraically, this Rieger-Nishimura ladder corresponds to the substitution that sends the generator $a$ to $\top$.
\end{definition}

\begin{figure}[H]
\noindent\hfill
% https://q.uiver.app/#q=WzAsMTcsWzEsMCwie3ZfMX0iXSxbMiwwLCJ7dl8wfSJdLFsxLDEsInt2XzN9Il0sWzIsMSwie3ZfMn0iXSxbMSwyLCJ7dl81fSJdLFsyLDIsInt2XzR9Il0sWzEsMywie3ZfN30iXSxbMiwzLCJ7dl82fSJdLFsxLDQsIlxcdmRvdHMiXSxbMiw0LCJcXHZkb3RzIl0sWzIsNSwiXFx2ZG90cyJdLFszLDAsIlxcdGlueSB2XzA9KFxce2EsYlxcfSxcXGVtcHR5c2V0KSJdLFswLDAsIlxcdGlueSB2XzE9KFxce2FcXH0sXFxlbXB0eXNldCkiXSxbMywxLCJcXHRpbnkgdl8yPShcXHthXFx9LFxce3ZfMFxcfSkiXSxbMCwxLCJcXHRpbnkgdl8zPShcXHthXFx9LFxce3YwLHYxXFx9KSJdLFszLDIsIlxcdGlueSB2XzQ9KFxce2FcXH0sXFx7djAsdjEsdjJcXH0pIl0sWzAsMiwiXFx0aW55IHZfNT0oXFx7YVxcfSxcXHt2MCx2MSx2Mix2M1xcfSkiXSxbMywxXSxbNSwzXSxbNyw1XSxbNiw0XSxbOSw3LCIiLDAseyJzdHlsZSI6eyJib2R5Ijp7Im5hbWUiOiJkb3R0ZWQifX19XSxbOCw2LCIiLDAseyJzdHlsZSI6eyJib2R5Ijp7Im5hbWUiOiJkb3R0ZWQifX19XSxbNCwyXSxbMiwwXSxbMiwxXSxbNSwwXSxbNCwzXSxbNywyXSxbNiw1XSxbOCw3LCIiLDEseyJzdHlsZSI6eyJib2R5Ijp7Im5hbWUiOiJkb3R0ZWQifX19XSxbMTAsNiwiIiwxLHsic3R5bGUiOnsiYm9keSI6eyJuYW1lIjoiZG90dGVkIn19fV1d
\begin{tikzcd}
	{\tiny v_1=(\{a\},\emptyset)} & {{v_1}} & {{v_0}} & {\tiny v_0=(\{a,b\},\emptyset)} \\
	{\tiny v_3=(\{a\},\{v0,v1\})} & {{v_3}} & {{v_2}} & {\tiny v_2=(\{a\},\{v_0\})} \\
	{\tiny v_5=(\{a\},\{v0,v1,v2,v3\})} & {{v_5}} & {{v_4}} & {\tiny v_4=(\{a\},\{v0,v1,v2\})} \\
	& {{v_7}} & {{v_6}} \\
	& \vdots & \vdots \\
	&& \vdots
	\arrow[from=2-2, to=1-2]
	\arrow[from=2-2, to=1-3]
	\arrow[from=2-3, to=1-3]
	\arrow[from=3-2, to=2-2]
	\arrow[from=3-2, to=2-3]
	\arrow[from=3-3, to=1-2]
	\arrow[from=3-3, to=2-3]
	\arrow[from=4-2, to=3-2]
	\arrow[from=4-2, to=3-3]
	\arrow[from=4-3, to=2-2]
	\arrow[from=4-3, to=3-3]
	\arrow[dotted, from=5-2, to=4-2]
	\arrow[dotted, from=5-2, to=4-3]
	\arrow[dotted, from=5-3, to=4-3]
	\arrow[dotted, from=6-3, to=4-2]
\end{tikzcd}
\hfill\null
\caption{The Hasse diagram of the Rieger-Nishimura ladder constructed in Definition~\ref{def:bel-rieger-nishimura}. The arrows $x \rightarrow y$ represent the covering relation $x < y \wedge \neg \exists z. x < z \wedge z < y$.}
\label{fig:rieger-nishimura-ladder}
\end{figure}

\section{Free Heyting algebras and topoi}\label{sec:xu-ye-theorem}

\begin{body}
In this section, we use the $\nabla$ operator to provide a simplified proof of the theorem, recently established by Xu~and~Ye~\cite{xu-ye-failure}, that the free Heyting algebra on two generators does not arise as the Heyting algebra of subterminal objects of any elementary topos.
\end{body}

\begin{body}
The development of this section does not use the semantics for second-order logic introduced in Section~\ref{sec:preliminaries}. However, it does make use of reasoning in the internal language of an elementary topos $\mathcal C$, where a formula in variable context $x_1:A_1,\ldots,x_n:A_n$ denotes a subobject of $A_1\times\cdots\times A_n$, and each sentence therefore denotes an arrow $1 \rightarrow \Omega$ (i.e.~a \textit{truth value}). In this semantics, quantifiers such as $\forall x:\Omega$ are given not by external quantification over all global elements $\mathcal{C}(1,\Omega)$, but by the appropriate adjoint to weakening. We will not introduce the internal language of topoi here, and refer the reader to Part~III~of~\cite{mclarty-topoi}.
\end{body}

\begin{body}
Much like the original argument of Xu~and~Ye~\cite{xu-ye-failure}, our argument uses Bellissima's universal Kripke model~\cite{bellissima-finite} $K_2$ and the corresponding Heyting algebra of upsets $\mathfrak{H}_2$ introduced above. However, in our argument, the point-free coderivative $\dera$ replaces their ad hoc obstruction, avoiding a calculation with three mutually recursive sequences and their six component internal encoding. This also eliminates the need to consider coprincipal Bellissima formulas and lets us use Bellissima worlds instead of Bellissima stages in a later part of the proof.
\end{body}

\subsection{The non-representability of $\dera$}

\begin{body}
We give a sufficient condition for propositional nonrepresentability in Lemma~\ref{lem:representables-have-finitely-many-minimal-pts}. To prove it, we recall two further results from the literature, as Theorems~\ref{thm:urquhart-free}~and~\ref{thm:darniere-irreducible} below. We note that these exact theorems, along with the two theorems recalled in Section~\ref{sec:bellissima}, are also the prerequisites of the original proof of Theorem~\ref{theorem:xu-ye} given by Xu~and~Ye.
\end{body}

\begin{theorem}[Theorem~3~of~\cite{urquhart-free}]\label{thm:urquhart-free}
Every element of a free Heyting algebra can be written as a finite join of join-irreducible elements.
\end{theorem}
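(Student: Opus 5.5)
I would prove the statement syntactically, for the free Heyting algebra on any set of generators, using Kripke semantics only for the primality step. Since the Heyting algebra is distributive, join-irreducible and join-prime elements coincide. So it suffices to show two things: every formula is provably equivalent to a finite disjunction of formulas from a syntactically defined class $\mathcal{P}$, and every $\pi \in \mathcal{P}$ has the \emph{disjunction property as a hypothesis}, i.e.\ $\pi \vdash \alpha \vee \beta$ implies $\pi \vdash \alpha$ or $\pi \vdash \beta$. The second property says exactly that $[\pi]$ is join-prime, and the case $\pi \not\vdash \bot$ ensures $[\pi] \neq \bot$ (the class of $\bot$ itself is the empty join).

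For the normal form I would take $\mathcal{P}$ to be the Harrop-style formulas: those with no strictly positive occurrence of $\vee$, i.e.\ built from variables, $\bot$, $\wedge$, and implications $\varphi \rightarrow \psi$ with $\psi$ again in $\mathcal{P}$ and $\varphi$ arbitrary. The normalisation is by induction on formulas, using three facts:
\begin{itemize}
\item distributivity $A \wedge (B \vee C) \leftrightarrow (A\wedge B)\vee(A\wedge C)$;
\item the law $(B \vee C) \rightarrow A \leftrightarrow (B \rightarrow A)\wedge(C\rightarrow A)$, which handles disjunctions in antecedents;
\item the observation that antecedents of implications may be arbitrary.
\end{itemize}
These do not suffice, since $\varphi \rightarrow (\psi_1 \vee \psi_2)$ has a strictly positive disjunction that none of them removes. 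I would therefore enlarge $\mathcal{P}$ to a class defined semantically rather than syntactically. Call $\pi$ \emph{gluable} if its class of finite rooted Kripke models is closed under the following operation: take the disjoint union of finitely many models of $\pi$ and add a new root carrying a suitable valuation (for instance the intersection of the valuations of the old roots). Then I would prove by induction that every formula is equivalent to a finite disjunction of gluable formulas.

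The disjunction property for gluable $\pi$ follows from the finite model property by the standard argument. Suppose $\pi \nvdash \alpha$ and $\pi \nvdash \beta$. Take finite rooted countermodels $M_\alpha, M_\beta$ of $\pi$ refuting $\alpha$ and $\beta$ respectively, and glue them under a new root $r$. By gluability $r \Vdash \pi$, while $r \nVdash \alpha \vee \beta$ by persistence. Hence $\pi \nvdash \alpha\vee\beta$.

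The main obstacle is the inductive step for implication in the normal form. Given $\varphi \rightarrow \bigvee_i \pi_i$ with each $\pi_i$ gluable, one must rewrite it as a finite disjunction of gluable formulas. The implication itself is gluable only when the new root can be given a valuation under which $\varphi$ fails or some $\pi_i$ holds there. I would first try to control this by choosing the root valuation to be as large as is compatible with persistence, namely the intersection of the valuations of the old roots. Then I would case-split on the finitely many possible root valuations, which is finite when the formula involves finitely many variables. Each case would contribute one disjunct: the implication conjoined with a formula pinning down the relevant variables. If this bookkeeping fails, I would fall back on the Kripke-semantic route through the finite $n$-universal models of bounded implication depth. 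There each formula of depth $d$ is determined by finitely many depth-$d$ types, and one writes it as the join of the de Jongh formulas of its finitely many minimal types. Each such formula defines a principal upset and is therefore join-prime by the same gluing argument.
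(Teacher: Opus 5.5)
\textbf{There is a genuine gap: the main route cannot be completed, because the normal form it aims at is false.} Whatever admissible root valuation you allow, gluability is essentially Ghilardi's extension property, which characterises projective formulas. It is strictly stronger than join-irreducibility.

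Take $\pi := (\neg\neg p \rightarrow p) \rightarrow (p \vee \neg p)$. The one-point models with $p$ true and with $p$ false both satisfy $\pi$. Persistence forces any new root $r$ placed below them to omit $p$. Then $r \nVdash \neg\neg p$, so $r \Vdash \neg\neg p \rightarrow p$, while $r \nVdash p \vee \neg p$. So $\pi$ is not gluable.

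Yet $[\pi]$ is join-prime. Given finite rooted models $M_\alpha, M_\beta$ of $\pi$ refuting $\alpha$ and $\beta$, place them together with an extra two-point chain $c < d$ under a new root. Let $p$ be false at $c$ and true at $d$, with all other variables true on the chain. At the root, let $p$ be false and use the intersection valuation for the remaining variables. The root refutes $\neg\neg p \rightarrow p$ because of $c$, hence forces $\pi$, and by persistence it refutes $\alpha \vee \beta$. (In the ladder of Definition~\ref{def:bel-rieger-nishimura}, reading $p$ as $b$, $\pi$ cuts out exactly $\uparrow v_4$.)

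A join-prime element that is a finite join equals one of the joinands, and gluability is invariant under provable equivalence. Hence $\pi$ is not equivalent to any finite disjunction of gluable formulas, and no case split on root valuations can rescue the induction.

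\textbf{Your fallback is the right idea, but its last sentence repeats the same error.} The characteristic formula $\theta_t := \bigwedge t$ of a depth-$d$ type $t$ is in general not gluable; $\pi$ is equivalent to one of them. So join-primality does not follow ``by the same gluing argument''.

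The missing trick is the one just used for $\pi$. When gluing countermodels $M_\alpha, M_\beta$ of $\theta_t$, also attach a copy of $\uparrow x$ for a point $x$ realising $t$. Give the new root $r$ exactly the atoms of $x$ among the variables of $t$. This is admissible because those atoms lie in $t$, so they hold at the roots of $M_\alpha, M_\beta$. Make all other variables true on the copy and use the intersection valuation for them at $r$.

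Induction on formulas $\gamma$ of depth at most $d$ then shows that if $x \Vdash \gamma$ then $r \Vdash \gamma$. The only real case is $\gamma = C \rightarrow D$. Here $\gamma$ holds throughout $M_\alpha$, $M_\beta$ and $\uparrow x$. Moreover $r \Vdash C$ gives $x \Vdash C$ by persistence, hence $x \Vdash D$, hence $r \Vdash D$ by the induction hypothesis. Thus $r \Vdash \theta_t$ and $r \nVdash \alpha \vee \beta$.

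Two further ingredients finish the proof for free Heyting algebras on any set of generators. The first is $\varphi \equiv \bigvee \{\theta_t : \varphi \in t\}$, which is a finite disjunction because there are only finitely many formulas of depth at most $d$ in finitely many variables up to equivalence. The second is the finite model property.

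Make sure the formulas you use are these $\theta_t$. They are principal only in the poset of types. They are not de Jongh formulas of minimal points in the universal model: in $K_2$ the set $\langle [\neg a] \rangle$ is a copy of the Rieger-Nishimura ladder and has no minimal points at all.
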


\begin{theorem}[Corollary~7.2~of~\cite{darniere-fgfha}]\label{thm:darniere-irreducible}
If $[\varphi]\in F_2$ is join-irreducible, then so is its image $\langle [\varphi] \rangle \in \mathfrak{H}_2$.
\end{theorem}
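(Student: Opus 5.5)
The plan is a direct argument inside $\mathfrak{H}_2$, using only Theorems~\ref{thm:bel-embedding} and~\ref{thm:bellissima-principal-formula}. Join-irreducibility in $\mathfrak{H}_2$ means: $U \neq \emptyset$, and whenever $U = A \sqcup B$ with $A,B$ upsets, $U = A$ or $U = B$.

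Let $[\varphi] \in F_2$ be join-irreducible and set $U = \langle[\varphi]\rangle$.

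\textbf{Step 1: $U$ is nonempty.} Join-irreducible elements are distinct from $\bot$, so $[\varphi] \neq \bot$. Injectivity in Theorem~\ref{thm:bel-embedding} then gives $U \neq \emptyset$.

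\textbf{Step 2: the set of worlds not below $x$ is representable.} Fix $x \in K_2$. I claim the upset $D_x := \SetComp{w \in K_2}{w \not\leq x}$ is propositionally representable. By Proposition~\ref{prop:bel-finite}, the set $\uparrow x \setminus \{x\}$ is finite. Using the formulas $\psi_z$ of Theorem~\ref{thm:bellissima-principal-formula}, define
$$\delta_x := \psi_x \rightarrow \bigvee_{z > x} \psi_z .$$
A world $w$ forces $\delta_x$ iff every $u \geq w$ with $u \geq x$ satisfies $u > x$. This fails exactly when $x$ itself lies in $\uparrow w$, so $w \Vdash \delta_x$ iff $w \not\leq x$. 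Hence $\langle[\delta_x]\rangle = D_x$. This step is where the finiteness of upsets in $K_2$ is really used, and it is the one I would check most carefully; it is a de Jongh-style formula.

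\textbf{Step 3: the splitting argument.} Suppose $U = A \sqcup B$ with $A, B \subsetneq U$. Pick $x \in U \setminus A$ and $y \in U \setminus B$.
\begin{itemize}
\item Since $A$ is an upset and $x \notin A$, no element of $A$ lies below $x$, so $\downarrow x \cap A = \emptyset$. Similarly $\downarrow y \cap B = \emptyset$.
\item Therefore $\downarrow x \cap \downarrow y \cap U = \emptyset$, because $U = A \cup B$. In other words, every $w \in U$ lies in $D_x$ or in $D_y$.
\item This gives $U = (U \sqcap D_x) \sqcup (U \sqcap D_y)$, which says $\langle [\varphi] \rangle = \langle [\varphi \wedge \delta_x] \rangle \sqcup \langle [\varphi \wedge \delta_y] \rangle$.
\item Since $\langle - \rangle$ is a Heyting algebra monomorphism, $[\varphi] = [\varphi\wedge\delta_x] \sqcup [\varphi\wedge\delta_y]$ in $F_2$.
\item Neither joinand equals $[\varphi]$. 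Indeed $x \in U$ but $x \notin D_x$, so $\langle[\varphi\wedge\delta_x]\rangle \neq U$; likewise $y$ witnesses the second joinand. Injectivity again transfers these inequalities to $F_2$.
\end{itemize}
This contradicts the join-irreducibility of $[\varphi]$. Hence $U$ is join-irreducible in $\mathfrak{H}_2$.

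I expect the only real subtlety to be Step 2. The naive approach of decomposing $U$ into principal upsets of minimal points breaks down, because $U$ may contain infinite descending chains. Working with the complements of down-sets $\downarrow x$ avoids that issue.
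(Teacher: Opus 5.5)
Your proof is correct, and it takes a genuinely different route from the paper. The paper gives no argument for this statement: it imports it as a black box from Darnière, the same citation Xu and Ye rely on. You instead derive it directly from Theorems~\ref{thm:bel-embedding} and~\ref{thm:bellissima-principal-formula} together with Proposition~\ref{prop:bel-finite}.

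The key device is your co-principal formula $\delta_x$, which shows that $K_2\setminus{\downarrow}x$ is propositionally representable for every world $x$; the forcing computation you flagged for checking is right. With that in hand, any splitting $U=A\sqcup B$ into proper pieces can be replaced by the representable splitting $U=(U\sqcap D_x)\sqcup(U\sqcap D_y)$, and this splitting pulls back to $F_2$.

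Your route buys self-containedness and generality. The same argument shows that join-irreducibles of any bounded sublattice of an upset algebra containing all the sets $K\setminus{\downarrow}x$ remain join-irreducible in the full upset algebra. It therefore applies equally to Bellissima's model on any finite number of generators. It also sidesteps the issue of missing minimal points that you correctly anticipate: for instance, $v_0>v_2>v_4>\cdots$ is an infinite descending chain in $K_2$. The citation buys the paper only brevity.

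One small inaccuracy: in your last bullet, passing from $\langle[\varphi\wedge\delta_x]\rangle\neq\langle[\varphi]\rangle$ to $[\varphi\wedge\delta_x]\neq[\varphi]$ uses only that $\langle-\rangle$ is a function, not injectivity. Injectivity is needed only to pull back the equality $[\varphi]=[\varphi\wedge\delta_x]\sqcup[\varphi\wedge\delta_y]$, and to get $U\neq\emptyset$ in Step 1.
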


\begin{lemma}\label{lem:representables-have-finitely-many-minimal-pts}
Every propositionally representable element of $\mathfrak{H}_2$ has only finitely many $\leq$-minimal points.
\begin{proof}
By Theorems~\ref{thm:urquhart-free}~and~\ref{thm:darniere-irreducible}, every propositionally representable element of $\mathfrak{H}_2$ can be written as a finite join of join-irreducibles $$X = Y_1 \sqcup \dots \sqcup Y_n.$$ % we use that <-> is a homomorphism too, but that can be supressed
Every minimal element $w \in X$ has to be a minimal element of one of these $Y_i$. But in an algebra of upsets, every join-irreducible element can have at most one minimal point\footnote{This is generally true. One can write $Y = Y \setminus \{w\} \cup Y \setminus \{m\}$ where $m$ denotes another minimal element.}
Thus, by the pigeonhole principle, the set $X$ contains at most $n$ minimal points.
\end{proof}
\end{lemma}

\begin{proposition}\label{prop:nabla-a-not-representable}
The set $\dera \in \mathfrak{H}_2$ is not propositionally representable.
\begin{proof}
We prove that each point of the form $(\emptyset,\{v_0,\dots,v_n\})$ is minimal in $\dera$. We can construct infinitely many points of this form, so by Lemma~\ref{lem:representables-have-finitely-many-minimal-pts}, we obtain that $\dera$ is not propositionally representable.

\vspace{0.5em} By the characterization of Proposition~\ref{prop:nabla-characterization-upsets}, we have $(\emptyset,\{v_0,\dots,v_n\}) \in \dera$ since the $v_i$ all belong to $\langle a \rangle$ by construction. However, we do not have $(\emptyset,\{v_0,\dots,v_n\}) \in \langle a \rangle$. Using Proposition~\ref{prop:nabla-characterization-upsets} once again, we see that this makes $(\emptyset,\{v_0,\dots,v_n\})$ a minimal element of $\dera$: any $x < (\emptyset,\{v_0,\dots,v_n\})$ by definition lies below an element outside of $\langle a \rangle$.
\end{proof}
\end{proposition}

\subsection{Representability in the internal language of a topos}

\begin{body}
From here onward, we assume for a contradiction that we work in a topos $\mathcal{C}$ with terminal object $1$ and subobject classifier $\Omega$, whose Heyting algebra of subterminal objects $\mathcal{C}(1,\Omega)$ is isomorphic to $F_2$. We will suppress this isomorphism, and freely identify arrows $\mathcal{C}(1,\Omega)$ with elements of $F_2$. In particular, we identify the free generators with arrows $a \in \mathcal{C}(1, \Omega)$ and $b \in \mathcal{C}(1, \Omega)$. Throughout the text, we use the notation $\wedge, \vee, \rightarrow$ for the internal Heyting algebra operations on $\Omega$.
\end{body}

\begin{body}
We will construct, in the internal language of $\mathcal{C}$, a unary predicate $\mathrm{Gen}(-)$ so that the truth value $\tau$ of the internal logic sentence
$$ \forall x : \Omega.\  \mathrm{Gen}(x)\rightarrow (x \vee (x \rightarrow a)) $$
satisfies $\langle \tau \rangle = \dera$. Since the latter is not propositionally representable, this gives the required contradiction to conclude that the topos $\mathcal{C}$ cannot exist.
\end{body}

\begin{definition}[in $\mathcal{C}$]
Consider $S : \mathcal{P}(\Omega)$. We call $S$ a \textit{Heyting predicate containing} $a,b$, and write $\mathrm{Hp}(S)$ if the following all hold:
\begin{enumerate}
  \item $S(\bot), S(\top), S(a), S(b)$,
  \item $\forall x,y : \Omega.\  S(x) \rightarrow S(y) \rightarrow S(x \wedge y)$,
  \item $\forall x,y : \Omega.\  S(x) \rightarrow S(y) \rightarrow S(x \vee y)$, and
  \item $\forall x,y : \Omega.\  S(x) \rightarrow S(y) \rightarrow S(x \rightarrow y)$.
\end{enumerate}
We say that $x : \Omega$ \textit{is generated by} $a,b$, and write $\mathrm{Gen}(x)$, if $x$ satisfies every Heyting predicate generated by $a,b$, in other words if $\forall S: \mathcal{P}(\Omega).\  \mathrm{Hp}(S) \rightarrow S(x)$ holds.
\end{definition}

\begin{proposition}\label{prop:external-prop-generated}
Let $\varphi$ denote a formula of intuitionistic propositional logic in two propositional variables $a,b$. Then $\vdash_\mathcal{C} \mathrm{Gen}(\varphi) : \Omega$ holds in the internal logic of $\mathcal{C}$.
\begin{proof}
Induction on the structure of the formula. The base cases of $\bot,\top,a,b$ follow from the first clause in the definition of $\mathrm{Hp}$ and the inductive cases follow from the other three clauses.
\end{proof}
\end{proposition}

\begin{body}
Note that when $\mathcal{C}$ has a natural numbers object, even the internal analogue of Proposition~\ref{prop:external-prop-generated} holds. We do not require our elementary topoi to have a natural numbers object, and the external version suffices for our purposes.
\end{body}

\begin{lemma}\label{lem:finale-1}
Let $\tau \in \mathcal{C}(1,\Omega)$ denote the truth value of the internal logic sentence
$$ \forall x : \Omega.\  \mathrm{Gen}(x)\rightarrow (x \vee (x \rightarrow a)).$$
Then $\langle \tau \rangle \sqsubseteq \dera$.
\begin{proof}
We prove the contrapositive, that for a world $w \in K_2$, if $w \not\in \dera$ then $w \not\in \langle \tau \rangle$. Take such a world $w \not\in \dera$ and assume for a contradiction that $w \in \langle\tau\rangle$. By the characterization of Proposition~\ref{prop:nabla-characterization-upsets}, we can find $v > w$ in $K_2$ so that $v \not \in \langle a \rangle$. In turn, by Proposition~\ref{prop:external-prop-generated}, we have that $\vdash_\mathcal{C} \mathrm{Gen}(\psi_v) : \Omega$ and hence that
$$ \vdash_\mathcal{C} \left(\forall x: \Omega.\ \mathrm{Gen}(x) \rightarrow (x \vee (x \rightarrow a))\right) \rightarrow (\psi_v \vee (\psi_v \rightarrow a)) : \Omega$$
holds in the internal logic of $\mathcal{C}$. Externally, it equals the arrow $\top \in \mathcal{C}(1,\Omega)$. Since $x \Rightarrow y = \top$ precisely if $x \sqsubseteq y$ in any Heyting algebra, passing the resulting inequality through the map $\langle-\rangle$ yields
$$ \langle \tau \rangle \sqsubseteq \langle[\psi_v \vee (\psi_v \rightarrow a) ]\rangle $$
and so
$$ \langle \tau \rangle \sqsubseteq \langle[\psi_v]\rangle \sqcup (\langle[\psi_v]\rangle \Rightarrow \langle a\rangle) = \uparrow v \sqcup (\uparrow v \Rightarrow \langle a\rangle)$$
holds in $\mathfrak{H}_2$. It suffices to show that $w \not\in\: \uparrow v \sqcup (\uparrow v \Rightarrow \langle a\rangle)$. Clearly $w \not\in\: \uparrow v$. But we cannot have $w \in\: \uparrow v \Rightarrow \langle a\rangle$ either, since $v$ itself gives a counterexample, a $v \geq w$ so that $v \in\: \uparrow v$ but $v \not\in \langle a \rangle$.
\end{proof}
\end{lemma}

\subsection{Logic over a Bellissima world}

\begin{body}
In the remainder of this section we will show that $\dera \sqsubseteq \langle \tau \rangle$ as well. We use an equivalence relation $\sim_w$ that is largely analogous to the $\equiv_d$ in the proof of Xu~and~Ye~\cite{xu-ye-failure}, but works directly over a finite principal upset instead of an entire stage $K_{2,d}$ of the universal Kripke model. % NB prop:logic-over-a-world strengthens their observation in sec4, which concerns one orbit of $u_\bullet$ required for their proof. This is the periodicity that 
\end{body}

\begin{definition}\label{def:equivalence-over-a-world}
Take any $w \in K_2$. We define the equivalence relation $\varphi_1 \sim_w \varphi_2$ on formulas of intuitionistic propositional logic in the two free variables $a,b$ so that it holds precisely if
$$ \vdash \psi_w \rightarrow (\varphi_1 \leftrightarrow \varphi_2) $$
is derivable in intuitionistic propositional logic.
\end{definition}

\begin{proposition}\label{prop:logic-over-a-world}
There are only finitely many equivalence classes of formulas under the relation $\sim_w$.
\begin{proof}
Using the fact that $X \rightarrow (Y \leftrightarrow Z)$ and $(X \wedge Y) \leftrightarrow (X \wedge Z)$ are equivalent in intuitionistic logic, we get that $\varphi_1 \sim_w \varphi_2$ precisely if $[\psi_w \wedge \varphi_1] = [\psi_w \wedge \varphi_2]$. Using the fact that $\langle-\rangle:F_2 \hookrightarrow \mathfrak{H}_2$ is a monomorphism, we get that this holds precisely if $\uparrow w \sqcap \langle[\varphi_1]\rangle =\: \uparrow w \sqcap \langle[\varphi_2]\rangle$ in $\mathfrak{H}_2$. The upset of every $w \in K_2$ is finite by Proposition~\ref{prop:bel-finite}, and hence the downset of every $\uparrow w \in \mathfrak{H}_2$ is finite as well, proving the claim.
\end{proof}
\end{proposition}

\begin{body}
Fix upfront some $w \in K_2$ and a finite sequence of formulas $i_1, i_2, \dots, i_n$ so that $\{i_1,i_2,\dots,i_n\}$ exhaust all of the (by Proposition~\ref{prop:logic-over-a-world} finitely many) equivalence classes of formulas under $\sim_w$. Notice that if $\varphi_1 \sim_w \varphi_2$ for intuitionistic propositional formulas $\varphi_1$ and $\varphi_2$, then by definition there is a derivation $\vdash \psi_w \rightarrow (\varphi_1 \leftrightarrow \varphi_2)$ in intuitionistic propositional logic. One can \textit{replay} this derivation in the internal logic of $\mathcal{C}$ as well, obtaining a derivation of
$ \vdash_\mathcal{C} \psi_w \rightarrow (\varphi_1 \leftrightarrow \varphi_2) : \Omega$
internally. We will make use of this fact in Propositions~\ref{prop:internal-interval-j-is-hpg}, \ref{prop:internal-gen-is-interval-j}~and~\ref{prop:internal-nabla}, which hold in the internal logic of $\mathcal{C}$.
\end{body}

\begin{definition}[in $\mathcal{C}$]\label{def:internal-interval-j}
Define the predicate $J_w(-) : \mathcal{P}(\Omega)$ as
$$ J_w(x) := \bigvee_{k=1}^n (\psi_w \rightarrow (x \leftrightarrow i_k)).$$
\end{definition}

\begin{body}
Note that this predicate has the same form as $I_\bullet$ in the proof of Xu~and~Ye~\cite{xu-ye-failure}, with our $\sim_w$ substituted for their $\equiv_d$.
\end{body}

\begin{proposition}[in $\mathcal{C}$]\label{prop:internal-interval-j-is-hpg}
The predicate $J_w(-) : \mathcal{P}(\Omega)$ satisfies $\mathrm{Hp}(J_w)$.
\begin{proof}
The base cases are immediate. We prove closure under disjunction: the other closure proofs are very similar. We have to show
$$ \forall x, y : \Omega.\: J_w(x) \rightarrow J_w(y) \rightarrow J_w(x \vee y).$$
Expanding the definition of $J_w(x \vee y)$, this means
$$ \forall x, y : \Omega.\: J_w(x) \rightarrow J_w(y) \rightarrow \bigvee_{k=1}^n (\psi_w \rightarrow ((x \vee y) \leftrightarrow i_k)).$$
So take $x,y : \Omega$ and assume $J_w(x)$ and $J_w(y)$. Use disjunction elimination on $J_w(x)$ to obtain some $i_p$ such that $\psi_w \rightarrow (x \leftrightarrow i_p)$, and similarly $i_q$ so that $\psi_w \rightarrow (y \leftrightarrow i_q)$. By Proposition~\ref{prop:logic-over-a-world}, we know some $i_r$ such that $i_p \vee i_q \sim_w i_r$. We prove the disjunct $\psi_w \rightarrow ((x \vee y)\leftrightarrow i_r)$.

\vspace{0.5em} Assume $\psi_w$. We then need to show $(x \vee y) \leftrightarrow i_r$. But from $\psi_w \rightarrow (x \leftrightarrow i_p)$ we have $x \leftrightarrow i_p$, and from $\psi_w \rightarrow (y \leftrightarrow i_q)$ we have $y \leftrightarrow i_q$. Thus, $(x \vee y) \leftrightarrow (i_p \vee i_q)$. But we also know that $\psi_w \rightarrow ((i_p \vee i_q) \leftrightarrow i_r)$, so by the assumption $(i_p \vee i_q) \leftrightarrow i_r$. Transitivity of $\leftrightarrow$ then gives $(x \vee y) \leftrightarrow i_r$ as required. 
\end{proof}
\end{proposition}

\begin{proposition}[in $\mathcal{C}$]\label{prop:internal-gen-is-interval-j}
We have $\forall x:\Omega.\: \mathrm{Gen}(x) \rightarrow J_w(x)$.
\begin{proof}
Take any $x: \Omega$ and assume $\mathrm{Gen}(x)$. By definition of $\mathrm{Gen}(x)$, this means that every $S:\mathcal{P}(\Omega)$ so that $\mathrm{Hp}(S)$ holds for $x$. But by Proposition~\ref{prop:internal-interval-j-is-hpg}, $\mathrm{Hp}(J_w)$ holds. Hence, $J_w(x)$ holds as well.
\end{proof}
\end{proposition}

\begin{body}
For the internal argument of Proposition~\ref{prop:internal-nabla}, further assume that $w \in \dera$. Then we have $\varphi \vee (\varphi \rightarrow a) \sim_w \top$ for any formula $\varphi$ of intuitionistic logic in two free variables $a,b$, and in particular this holds for any $i_p$ in the sequence $i_1,\dots,i_n$. As before, we can replay the corresponding proof in the internal logic of our topos $\mathcal{C}$, and obtain $\vdash_\mathcal{C} \psi_w \rightarrow (i_p \vee (i_p \rightarrow a)):\Omega$.
\end{body}

\begin{proposition}[in $\mathcal{C}$]\label{prop:internal-nabla}
We have $\psi_w \rightarrow \forall x : \Omega.\: \mathrm{Gen}(x) \rightarrow (x \vee (x \rightarrow a))$.
\begin{proof}
Assume $\psi_w$. Take some $x : \Omega$ and assume $\mathrm{Gen}(x)$. Then by Proposition~\ref{prop:internal-gen-is-interval-j}, we obtain $J_w(x)$. Use disjunction elimination on $J_w(x)$ to obtain a disjunct $\psi_w \rightarrow (x \leftrightarrow i_p)$ for some $i_p$. From the first assumption, we get $x \leftrightarrow i_p$. But $\psi_w \rightarrow (i_p \vee (i_p \rightarrow a))$ holds, so $i_p \vee (i_p \rightarrow a)$ holds as well. Replacing equivalent formulas with equivalent formulas, we obtain $x \vee (x \rightarrow a)$ as claimed.
\end{proof}
\end{proposition}

\begin{lemma}\label{lem:finale-2}
Let $\tau \in \mathcal{C}(1,\Omega)$ denote the truth value of the internal logic sentence
$$ \forall x : \Omega.\  \mathrm{Gen}(x)\rightarrow (x \vee (x \rightarrow a)).$$
Then $\dera \sqsubseteq \langle \tau \rangle$.
\begin{proof}
Assume that $w \in \dera$. We claim that then $w \in \langle \tau\rangle$. From the internal Proposition~\ref{prop:internal-nabla} we know that
$$ \vdash_\mathcal{C} \psi_w \rightarrow \forall x: \Omega.\: \mathrm{Gen}(x) \rightarrow (x \vee (x \rightarrow a)) : \Omega$$
and hence that $[\psi_w] \sqsubseteq \tau$ in $F_2$. The latter, passed through the function $\langle - \rangle$ immediately gives $\uparrow w = \langle[\psi_w]\rangle \sqsubseteq \langle \tau \rangle$ in $\mathfrak{H}_2$, and hence $w \in \langle \tau \rangle$ as claimed.
\end{proof}
\end{lemma}

\begin{theorem}[Xu-Ye~\cite{xu-ye-failure}]\label{theorem:xu-ye}
The free Heyting algebra on two generators does not occur as the Heyting algebra of subterminal objects of any elementary topos.
\begin{proof}
By Lemma~\ref{lem:finale-1} and Lemma~\ref{lem:finale-2}, under the isomorphism $\mathcal{C}(1, \Omega) \cong F_2$, the truth value $\tau$ of the internal sentence
$$\forall x: \Omega. \mathrm{Gen}(x) \rightarrow (x \vee (x \rightarrow a))$$
would be an element of $F_2$ that propositionally represents $\dera$. By Proposition~\ref{prop:nabla-a-not-representable}, no such element exists.
\end{proof}
\end{theorem}

%%%%%%%%%%%%%%%%%%%%%%%%%%%%%%%%%%%%%%%%%%%%%%%%%%%%%%%%%%%%%%%%%%%%%%%%%%%%%%%%

\section{Complete Heyting algebras are not complete}\label{sec:failure-of-completeness}

\begin{body}
In this section we prove that complete Heyting algebra semantics does not give a strongly complete semantics for second-order intuitionistic propositional logic. We do this explicitly, by writing down a theory $\mathcal{T}$ and formula $\varphi$ such that $\mathcal T \vDash \varphi$, but $\mathcal{T} \not\vdash_2 \varphi$. We present a semantic proof, chiefly so that we can reuse Bellissima's algebra as a counterexample. At the end of the section, we briefly sketch a proof-theoretic argument which proves the same result. The main technical tool in both approaches is a second-order definition of the least fixed point of the coderivative (Proposition~\ref{prop:nabla-least-fixed-point}).
\end{body}

\subsection{Fixed points of the coderivative}

\begin{body}
Let $\varphi$ denote an intuitionistic second-order propositional formula. For the remainder of this section, $\Delta(\varphi)$ abbreviates the formula
$$ \forall Y. Y \vee (Y \rightarrow \varphi)$$
for some variable $Y$ fresh in $\varphi$.
\end{body}

\begin{body}
Note that if $(H,\sqsubseteq)$ is a complete Heyting algebra and $v$ is a valuation with values in $H$, then $\langle \Delta(\varphi) \rangle^H_v = \nabla \langle \varphi \rangle^H_v$.
%\begin{align*}
%  \langle \Delta(\varphi) \rangle^H_v =& \\
%  =& \langle \forall Y. Y \vee (Y \rightarrow \varphi) \rangle^H_v \\
%  =& \bigsqcap_{y \in H} \langle Y \vee (Y \rightarrow \varphi) \rangle^H_{v(Y\mapsto y)} \\
%  =& \bigsqcap_{y \in H} \langle Y \rangle^H_{v(Y\mapsto y)} \vee \langle Y \rightarrow \varphi \rangle^H_{v(Y\mapsto y)} \\
%  =& \bigsqcap_{y \in H} y \vee \left(\langle Y \rangle^H_{v(Y\mapsto y)} \rightarrow  \langle \varphi \rangle^H_{v(Y\mapsto y)}\right) \\
%  =& \bigsqcap_{y \in H} y \vee \left(y \rightarrow  \langle \varphi \rangle^H_{v(Y\mapsto y)}\right) \\
%  =& \bigsqcap_{y \in H} y \vee \left(y \rightarrow  \langle \varphi \rangle^H_{v}\right)
%\end{align*}
%where the last equality uses the fact that $Y$ does not occur free in $\varphi$. Put another way, $\langle \Delta(\varphi) \rangle^H_v = \nabla \langle \varphi \rangle^H_v$.
\end{body}

\begin{proposition}\label{prop:nabla-monotone}
The map $\nabla: H \rightarrow H$ is order-preserving in any complete Heyting algebra $(H, \sqsubseteq)$.
\begin{proof}
By semantic soundness (Proposition~\ref{prop:soundness}) it suffices to prove that $$\forall X. \forall Y. (X \rightarrow Y) \rightarrow \Delta(X) \rightarrow \Delta(Y)$$ holds internally in second-order intuitionistic propositional logic. So take $X,Y$, assume $X \rightarrow Y$ and $\Delta(X)$. Take an arbitrary $Q$. We will prove $Q \vee (Q \rightarrow Y)$. By $\Delta(X)$, we know that in particular $Q \vee (Q \rightarrow X)$ must hold. We analyze the two possibilities. If $Q$ holds, we are done since the first disjunct of $Q \vee (Q \rightarrow Y)$ holds. Otherwise, $Q \rightarrow X$ holds, and since $X \rightarrow Y$, by transitivity of implication the second disjunct of $Q \vee (Q \rightarrow Y)$ holds instead.
\end{proof}
\end{proposition}

\begin{body}
By the Knaster-Tarski fixed point theorem, the operator $\nabla$ has a $\sqsubseteq$-least fixed point in any complete Heyting algebra $(H, \sqsubseteq)$. In Proposition~\ref{prop:nabla-least-fixed-point}, using a trick of Wadler~\cite{wadler-recforfree}, we show that the least fixed point of $\nabla$ is definable already in second-order propositional logic. Since $X \sqsubseteq \nabla X$ (because $X \sqsubseteq Y \Rightarrow X$), constructing an $X$ so that $\nabla(X) \sqsubseteq X$ immediately yields a fixed point.
\end{body}

\begin{proposition}\label{prop:nabla-least-fixed-point}
Take a complete Heyting algebra $(H, \sqsubseteq)$. Let $M_H$ denote the interpretation
$$ \langle \forall X. (\Delta(X) \rightarrow X) \rightarrow X \rangle^H_v$$
of the sentence $M:=\forall X. (\Delta(X) \rightarrow X) \rightarrow X$ of second-order propositional logic. Then $\nabla(M_H) = M_H$. Moreover, for any $x \in H$ so that $\nabla(x) \sqsubseteq x$, we have $M_H \sqsubseteq x$ as well.
\begin{proof}
As in Proposition~\ref{prop:nabla-monotone}, it will suffice to prove the two statements
$$\Delta(M) \rightarrow M \text{ and } \forall X. (\Delta(X) \rightarrow X) \rightarrow M \rightarrow X$$
internally to intuitionistic second-order propositional logic. In fact, the same argument works for any definable map whose monotonicity is provable in \textbf{LJ2} (see~\cite{wadler-recforfree}). First, we prove
$$ \forall X. (\Delta(X) \rightarrow X) \rightarrow M \rightarrow X. $$
Take arbitrary $X$, and assume that both $\Delta(X) \rightarrow X$ and $M$ hold. We will show that $X$ holds as well. Since $M$ holds, we have $(\Delta X \rightarrow X) \rightarrow X$. But by assumption, $\Delta(X) \rightarrow X$ holds. Thus, $X$ holds as claimed. Second, we prove $\Delta(M) \rightarrow M$, in other words
$\Delta(M) \rightarrow \forall X. (\Delta(X) \rightarrow X) \rightarrow X.$
Assume $\Delta(M)$ and take arbitrary $X$ so that $\Delta(X) \rightarrow X$. We claim that $X$ holds. Since $\Delta(X) \rightarrow X$, the previous result gives $M \rightarrow X$. But then by the argument of Proposition~\ref{prop:nabla-monotone}, $\Delta(M) \rightarrow \Delta(X)$. Since $\Delta(M)$ is an assumption, $\Delta(X)$ holds, and then by $\Delta(X) \rightarrow X$ so does $X$ as claimed.
\end{proof}
\end{proposition}

\begin{body}
Note that topologically, the least fixed point of $\nabla$ constructed in Proposition~\ref{prop:nabla-least-fixed-point} computes the open complement of the \textit{perfect kernel}, the largest closed subset of that space which contains no isolated points.
\end{body}

\subsection{$\nabla$-sequences}

\begin{definition}\label{def:nabla-sequence}
Let $N$ denote an index set, $s: N \rightarrow N$ an endomorphism of $N$, and $(H, \sqsubseteq)$ some complete Heyting algebra. We call an $N$-indexed set $P: N \rightarrow H$ a \textit{$\nabla$-sequence} if $\nabla P_{s(n)} \sqsubseteq P_n$ for all $n \in N$. We call the $\nabla$-sequence \textit{trivial} if $P_n = \top_H$ holds for all $n \in N$.
\end{definition}

\begin{lemma}\label{lem:nabla-sequence-triviality}
In a complete Heyting algebra model $\Hh$ so that $\Hh \models M$, every $\nabla$-sequence is trivial.
\begin{proof}
Take any $\nabla$-sequence $P: N \rightarrow H$ and denote the corresponding endomorphism $s: N \rightarrow N$. The greatest lower bound $\bigsqcap_{x \in N} P_x$ exists in $H$ by virtue of completeness. We first prove that $\bigsqcap_{x \in N} P_x$ is a fixed point of $\nabla$. Set $\rho = \bigsqcap_{x \in N} P_x$.

\vspace{0.5em}\textbf{Claim.} We claim that $\nabla \rho \sqsubseteq \bigsqcap_{x\in N} \nabla (P_{s(x)})$. It is enough to show that $\nabla \rho \sqsubseteq \nabla P_{s(i)}$ for any $i \in N$. In turn, by Proposition~\ref{prop:nabla-monotone}, it suffices to show $\rho \sqsubseteq P_{s(i)}$. But the latter is immediate from the definition of $\rho$.

\vspace{0.5em}Since $\nabla P_{s(x)} \sqsubseteq P_x$ for all $x \in N$, our claim gives
$$ \nabla \rho \sqsubseteq \bigsqcap_{x\in N} \nabla (P_{s(x)}) \sqsubseteq \bigsqcap_{x\in N} (P_x) = \rho.$$
This means that $\rho$ is a fixed point of $\nabla$. But by Proposition~\ref{prop:nabla-least-fixed-point}, the interpretation $\langle M \rangle^H_v = M_H$ gives the \textit{least} such fixed point, so $M_H \sqsubseteq \rho$. Thus, since $\Hh \models M$, we have
$$\top_H = M_H \sqsubseteq \rho \sqsubseteq P_x$$
for any $x \in N$, in other words the $\nabla$-sequence $P$ is trivial.
\end{proof}
\end{lemma}

\subsection{The theory of a free $\nabla$-sequence}

\begin{body}
We now introduce the ``theory of a free $\nabla$-sequence'', a set $\mathcal{T}$ of second-order propositional formulas which essentially asserts that the free propositional variables $P_0, P_1, P_2,\dots$ which occur in $\mathcal{T}$ form a $\nabla$-sequence with index set $\mathbb{N}$ and endomorphism $s(x) = x + 1$.
\end{body}

\begin{definition}\label{def:theory-of-nabla-seq}
The set $\mathcal{T}$ consists of the following \textbf{LJ2}-formulas:
\begin{itemize}
    \item the sentence $\forall X. (\Delta(X) \rightarrow X) \rightarrow X$, and
    \item the formulas $\Delta P_{n+1} \rightarrow P_n$ for each $n \in \mathbb{N}$.
\end{itemize}
We call $\mathcal{T}$ the \textit{theory of a free $\nabla$-sequence}.
\end{definition}

\begin{lemma}\label{lem:free-nabla-semantically-valid}
In complete Heyting algebra semantics, $\mathcal{T} \vDash P_0$.
\begin{proof}
Take an arbitrary complete Heyting algebra model $\Hh$ so that $\Hh \models \varphi$ for each formula $\varphi \in \mathcal{T}$. Then in particular $\Hh \models \forall X. (\Delta(X) \rightarrow X) \rightarrow X$, and the map $Q : \mathbb{N} \rightarrow H$ given by $Q_i = \langle P_i \rangle^H_v$ gives a $\nabla$-sequence with endomorphism map $s(x) = x + 1$ in $\Hh$. Consequently, by Lemma~\ref{lem:nabla-sequence-triviality},  $\langle P_i \rangle^H_v = Q_i= \top_H$ and therefore $\Hh \models P_i$ for every $i \in \mathbb{N}$. Taking $i=0$, we obtain that $\Hh \models P_0$. Since this holds for an arbitrary $\Hh$, we conclude $\mathcal{T} \vDash P_0$.
\end{proof}
\end{lemma}

\begin{lemma}\label{lem:free-nabla-not-derivable}
The free $\nabla$-sequence is nontrivial: in other words, $\mathcal{T} \not\vdash_2 P_0$.
\begin{proof}
Assume for a contradiction that $\mathcal{T} \vdash_2 P_0$. By definition of syntactic consequence, this means that there is a finite sequence $\Lambda$ consisting of axioms from $\mathcal{T}$ so that $\Lambda \vdash_2 P_0$. We will construct a valuation $v$ so that the complete Heyting algebra model $\mathcal{H} = (\mathfrak{H}_2, \sqsubseteq, v)$ satisfies $\mathcal{H} \models \lambda$ for each $\lambda \in \Lambda$, but for which $\mathcal{H} \not\models P_0$. We use $\mathfrak{H}_2$ for economy of presentation, but note that many simpler algebras would also suffice to construct the desired counterexample.

\vspace{0.5em} Let $N\in \mathbb{N}$ denote the smallest natural number so that the truncated theory
$$ \mathcal{T}_N = \{M \} \cup \SetComp {\Delta P_{n+1} \rightarrow P_n}{n < N}$$
contains all the formulas in $\Lambda$. Take any valuation satisfying $v(P_i) = \nabla^{N-i} (\emptyset)$ for every $i \leq N$, where $\nabla^k(-)$ denotes iterated application of $\nabla(-)$ exactly $k$ times. Then we have
\begin{align*}
 \langle \Delta P_{n+1} \rightarrow P_n \rangle^{\mathfrak{H}_2}_v &
 = \nabla v(P_{n+1}) \Rightarrow v(P_n) \\
 & = \nabla \nabla^{N-n-1}(\emptyset) \Rightarrow \nabla^{N-n}(\emptyset) \\
 & = \nabla^{N-n}(\emptyset) \Rightarrow \nabla^{N-n}(\emptyset) \\ 
 & = K_2
\end{align*}
for all $n < N$ as well.
However, $\langle P_0 \rangle^{\mathfrak{H}_2}_v = \nabla^{N} (\emptyset)$. We need to show that $\nabla^{N} (\emptyset)$ is not $\top$ in $\mathfrak{H}_2$. This amounts to finding a single world so that $w \not\in \nabla^{N} \bot$. But that is straightforward: by Proposition~\ref{prop:nabla-characterization-upsets}, $w \not\in \nabla X$ precisely if $\exists x > w. x \not\in X$. Consequently, since $x \not\in \bot$ holds for any world $x$, we have $w \not\in \nabla^N (\emptyset)$ precisely if there is a chain
$$ w < x_0 < \dots < x_{N-1}$$
starting from $w$. But e.g. the copy of the Rieger-Nishimura ladder obtained in Definition~\ref{def:bel-rieger-nishimura} already contains worlds with arbitrarily long chains above them (see Figure~\ref{fig:rieger-nishimura-ladder}). Taking one such element $w$ with a chain containing at least $N$ strict inequalities above it, we get that $w \not\in \nabla^N (\emptyset)$, and hence $\nabla^N (\emptyset) \neq K_2$.

\vspace{0.5em} All that's left to prove is that $\mathcal{H} \models M$ itself, in other words that $\langle M \rangle^{\mathfrak{H}_2}_v = K_2$. From Proposition~\ref{prop:nabla-least-fixed-point}, we know that the interpretation of $M$ is the least fixed point of $\nabla$ in $\mathfrak{H}_2$. We will show that this least fixed point is $K_2$ itself. Assume $\nabla(X) = X$ for some $X \in \mathfrak{H}_2$. We prove that each world $w \in K_{2,n}$ belongs to $X$ by induction on $n$. In the base case, the elements of $K_{2,0}$ do not have any elements above them, so vacuously belong to $\nabla(X)$ by Proposition~\ref{prop:nabla-characterization-upsets}. Since $\nabla (X) = X$, they belong to $X$ as well. In the inductive step, the elements of $K_{2,k}$ belong to $X$ by assumption. If $w \in K_{2,k+1}$ and $x > w$, then $x \in K_{2,k}$. This means that every chain starting from an element of $K_{2,k+1}$ enters $K_{2,k}$ immediately, all elements of $K_{2,k+1}$ belong to $\nabla(X)$, and by $\nabla (X) = X$, to $X$ as well. The induction thus proves that $X = K_2$ for any fixed point $X \in \mathfrak{H}_2$ of the $\nabla$ operator. Since $\langle M \rangle^{\mathfrak{H}_2}_v$ is a fixed point of $\nabla$, we get $\langle M \rangle^{\mathfrak{H}_2}_v = K_2$, in other words $\mathcal{H} \models M$.

\vspace{0.5em} Under the assumption $\mathcal{T} \vdash P_o$, we proved that $\Lambda \vdash_2 P_0$, and that there exists some complete Heyting algebra model $\mathcal{H}$ such that $\mathcal{H} \models \lambda$ for every $\lambda \in \Lambda$, but $\mathcal{H} \not\models P_0$, thereby contradicting soundness (Proposition~\ref{prop:soundness}).
\end{proof}
\end{lemma}

\begin{theorem}\label{thm:cha-not-complete}
Complete Heyting algebra semantics is not strongly complete for intuitionistic second-order propositional logic.
\begin{proof}
Immediate by Lemma~\ref{lem:free-nabla-semantically-valid} and Lemma~\ref{lem:free-nabla-not-derivable}.
\end{proof}
\end{theorem}

\subsection{Sketch of a proof-theoretic argument}

\begin{body}
The semantics-heavy proof of Lemma~\ref{lem:free-nabla-not-derivable} admits a purely proof-theoretic counterpart, via a proof translation of \textbf{LJ2} into the modal logic \textbf{KM} of Kuznetsov and Muravitsky~\cite{muravitsky-logic-km}. Litak~\cite{litak-modalities} notes that the operator $\Delta$ acts like the modal operator $\square$ of \textbf{KM}, and the usual Kripke semantics of the logic \textbf{KM} is known not to satisfy strong completeness: F{\'e}r{\'e}e and Shillito \cite{feree-shillito} attribute the example showing this to Mojtaba~Mojtahedi. In fact, the author originally found the semantic arguments above by constructing a translation of \textbf{LJ2} into the modal logic \textbf{KM} which sends $\Delta$ to $\square$. In this final section, we provide a brief outline of this argument via proof-theoretic translation. Many of the proofs are left as sketches or omitted altogether.
\end{body}

\begin{definition}[\cite{kuznetsov-km-logic}]
The modal logic \textbf{KM} is an intuitionistic (quantifier-free) propositional modal logic with modality $\square$ and with the modal axioms
\begin{enumerate}
\item $\varphi_1 \rightarrow \square \varphi_1$,
\item $(\square\varphi_1 \rightarrow \varphi_1) \rightarrow \varphi_1$, and
\item $\square \varphi_1 \rightarrow (\varphi_2 \vee (\varphi_2 \rightarrow\varphi_1))$
\end{enumerate}
for all modal propositional formulas $\varphi_1, \varphi_2$.
\end{definition}

\begin{body}
F{\'e}r{\'e}e and Shillito~\cite{feree-shillito} present a cut-free sequent calculus \textbf{G4KM} for \textbf{KM}, which they use to prove a uniform interpolation theorem. We recall the statement as Theorem~\ref{thm:feree-shillito-quantifiers} below, but refer the interested reader to their article for all other details. In what follows, we use $\vdashkm$ to denote the turnstile of the calculus \textbf{G4KM}.
\end{body}

\begin{theorem}[F{\'e}r{\'e}e-Shillito \cite{feree-shillito}]\label{thm:feree-shillito-quantifiers}
Consider a finite sequence of propositional variables $\overline{X}$, and a propositional variable $Y$ outside the sequence $\overline{X}$. Let $\Phi(\overline{X}, Y)$ denote a \textbf{KM}-formula containing only the variables in $\overline{X}, Y$ as free variables. Then one can find quantifier-free \textbf{KM}-formulas
$$\pite Y.\Phi(\overline{X}, Y)$$
and
$$\pita Y.\Phi(\overline{X}, Y)$$
so that the following hold:
\begin{enumerate}
    \item All free propositional variables in $\pite Y.\Phi(\overline{X}, Y)$ and $\pita Y.\Phi(\overline{X}, Y)$ belong to the sequence $\overline{X}$.
    \item For a formula $\Psi(\overline{X})$, \textbf{KM} proves that
    $(\pite Y. \Phi(\overline{X},Y)) \vdashkm \Psi(\overline{X})$ precisely if it proves that $\Phi(\overline{X},Y) \vdashkm \Psi(\overline{X})$.
    \item For a formula $\Psi(\overline{X})$, \textbf{KM} proves that
    $\Psi(\overline{X}) \vdashkm (\pita Y. \Phi(\overline{X},Y))$ precisely if it proves that $\Psi(\overline{X}) \vdashkm \Phi(\overline{X},Y)$, and
    \item $\pita Y. \Phi(\overline{X},Y) \vdashkm \Phi(\overline{X},Y)$ and $\Phi(\overline{X},Y) \vdashkm \pite Y. \Phi(\overline{X},Y)$ are both provable in \textbf{KM} as well.
\end{enumerate}
\end{theorem}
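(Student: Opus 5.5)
The plan is to adapt Pitts' proof of uniform interpolation for intuitionistic propositional logic to the terminating cut-free calculus \textbf{G4KM} of F{\'e}r{\'e}e and Shillito. I would take the existence of \textbf{G4KM}, its cut admissibility, its completeness for \textbf{KM}, and a well-founded multiset ordering $\prec$ on sequents under which every premise of every rule is smaller than its conclusion, all as given. I would define the interpolants more generally for finite multisets $\Gamma$: a formula $\pite Y.\Gamma$ and, for each formula $\theta$, a formula $\pita Y.(\Gamma;\theta)$. Then I would set $\pite Y.\Phi := \pite Y.\{\Phi\}$ and $\pita Y.\Phi := \pita Y.(\emptyset;\Phi)$.

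Both families are defined by simultaneous recursion along $\prec$. Following Pitts, $\pite Y.\Gamma$ is a finite conjunction with one conjunct for each way that $\Gamma$ can be the antecedent of a conclusion of a left rule of \textbf{G4KM}. Each conjunct is built from the interpolants of the corresponding (smaller) premises. Atoms other than $Y$ are kept, and $Y$ itself is dropped. Dually, $\pita Y.(\Gamma;\theta)$ is a finite disjunction with one disjunct for each right rule and each left rule that could end a derivation of $\Gamma \vdashkm \theta$, plus the Pitts-style implicational disjuncts for the $\rightarrow L$ variants of G4.

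The genuinely new cases are the modal rules of \textbf{G4KM}, i.e.~its \emph{L\"ob/KM-style} right rule for $\square$. For example, when $\Gamma \vdashkm \square\varphi$ is reduced to $\Gamma, \square\varphi \vdashkm \varphi$, I would add a disjunct of the form $\square\,\pita Y.(\Gamma,\square\varphi;\varphi)$. I would also add the corresponding boxed and unboxed copies of $\pite Y$ of the context, mirroring how the rule handles boxed formulas. Because the recursion runs along $\prec$, these definitions are well founded and the resulting formulas are quantifier-free. A straightforward induction then shows that their free variables lie among $\overline{X}$, which gives clause~1.

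Clauses~2--4 would be proved together by induction on $\prec$, splitting on the last rule.
\begin{itemize}
\item \emph{Soundness, clause~4.} To show $\Gamma \vdashkm \pite Y.\Gamma$ and $\Gamma, \pita Y.(\Gamma;\theta) \vdashkm \theta$, I would check each conjunct or disjunct against the corresponding rule instance, using the induction hypothesis on the premises.
\item \emph{Universality, the nontrivial directions of clauses~2 and~3.} Here I would show that if $\Gamma, \Sigma \vdashkm \Psi$ with $Y$ absent from $\Sigma$ and $\Psi$, then $\pite Y.\Gamma, \Sigma \vdashkm \Psi$, and analogously for $\pita$. The argument is an induction on the height of a cut-free \textbf{G4KM} derivation, analysing whether the last rule acts on the $\Gamma$-part, the $\Sigma$-part, or both.
\end{itemize}
The converse directions of clauses~2 and~3 then follow from clause~4 by cut.

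The main obstacle is the universality case in which the last rule is the modal right rule. That rule modifies the entire context, boxing or unboxing formulas from both $\Gamma$ and $\Sigma$ at once. The issue is that the interpolant of the modified context must be recovered, inside \textbf{KM}, from $\pite Y.\Gamma$. I would first try to prove two auxiliary lemmas by the same induction:
\[
\square\, \pite Y.\Gamma \vdashkm \pite Y.\square\Gamma \qquad\text{and}\qquad \pite Y.\Gamma \vdashkm \square\, \pite Y.\Gamma .
\]
The second is immediate from the \textbf{KM} axiom $\varphi \rightarrow \square\varphi$. The first should come from the way the modal conjuncts were chosen. If the first fails for the naive definition, I would strengthen the definition by closing it under $\square$, adding boxed copies of every conjunct. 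I would then check that this does not break the termination of the recursion along $\prec$.
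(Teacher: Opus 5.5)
The paper gives no proof of this theorem. It is quoted from F\'er\'ee and Shillito, whose argument, as the paper says, runs through the cut-free calculus \textbf{G4KM}. A Pitts-style construction over \textbf{G4KM} is therefore the expected route, and several parts of your plan are fine: the intuitionistic clauses, the split into soundness and universality, and the derivation of clauses~2--3 from clause~4 together with the relative universality statements. For $\pita$ the universality statement has to be the relative one: if $\Sigma,\Gamma\vdashkm\theta$ with $Y$ not in $\Sigma$, then $\Sigma,\pite Y.\Gamma\vdashkm\pita Y.(\Gamma;\theta)$.

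The gap is in the modal cases, which are the only new content. You flag them but do not close them, and the tools you propose there point the wrong way.

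Take the universality case where the last rule is the right box rule with $Y$-free principal formula $\square\psi$. Write $\Sigma',\Gamma'$ for the premise contexts obtained by unboxing. The induction hypothesis gives $\Sigma',\square\psi,\pite Y.\Gamma'\vdashkm\psi$. So to reapply the rule beneath $\Sigma,\pite Y.\Gamma$, you need $\pite Y.\Gamma\vdashkm\square\,\pite Y.\Gamma'$. Your lemmas do not supply this:
\begin{itemize}
\item For $\Gamma=\square\Delta$, the required fact is the \emph{converse} of your first lemma.
\item Your second lemma concerns $\Gamma$ rather than $\Gamma'$, so it only covers the box-free case $\Gamma'=\Gamma$.
\item Adding boxed copies of conjuncts changes nothing, because $C\rightarrow\square C$ makes them \textbf{KM}-redundant.
\end{itemize}
The fix is definitional. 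Make $\square\,\pite Y.\Gamma'$ a conjunct of $\pite Y.\Gamma$ whenever $\Gamma$ contains a boxed formula. This is sound by the box rule and the induction hypothesis, and well founded since then $\Gamma'\prec\Gamma$.

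Similarly, your disjunct $\square\,\pita Y.(\Gamma',\square\varphi;\varphi)$ needs Pitts' guard: $\square(\pite Y.(\Gamma',\square\varphi)\rightarrow\pita Y.(\Gamma',\square\varphi;\varphi))$. The diagonal formula $\square\varphi$ may contain $Y$, so it sits on the $Y$-side of the premise. The relative induction hypothesis then yields $\pita Y.(\Gamma',\square\varphi;\varphi)$ only in the presence of $\pite Y.(\Gamma',\square\varphi)$, and nothing in $\Sigma,\pite Y.\Gamma$ supplies that. With the guard, this case is immediate by weakening.

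Second, you identify the modal part of \textbf{G4KM} with its L\"ob-style right box rule, but that cannot be all of it. Consider a cut-free derivation of $\square p\vdashkm q\vee(q\rightarrow p)$.
\begin{itemize}
\item It cannot end with $\vee R$. Both $\square p\vdashkm q$ and $\square p\vdashkm q\rightarrow p$ fail in the one-world model, where $\square p$ holds vacuously (take $q$ false, respectively $q$ true and $p$ false).
\item It cannot end with a right box rule, since the succedent is not boxed.
\item It cannot end with any intuitionistic left rule, since the only antecedent formula is $\square p$.
\end{itemize}
So \textbf{G4KM} must contain a further rule exploiting boxed antecedent formulas. That rule carries exactly the content of the axiom $\square\varphi_1\rightarrow(\varphi_2\vee(\varphi_2\rightarrow\varphi_1))$, which separates \textbf{KM} from the strong L\"ob logic axiomatised by the other two. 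Your construction gives no $\pite$ or $\pita$ clauses for it, and no soundness or universality cases. This is precisely where the \textbf{KM}-specific work lies. Until both modal rules are handled, clauses~2--4 are not established.
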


\begin{corollary}\label{cor:feree-shillito-quantifiers}
Let $\Phi(\overline{X}, Y)$ as in Theorem~\ref{thm:feree-shillito-quantifiers} and let $T$ be an arbitrary formula of \textbf{KM}, with no restriction on its variables. As usual, let $\Phi(\overline{X},T)$ denote the result of substituting $T$ for $Y$ throughout $\Phi$ in a capture-avoiding way. Then both
$$\pita Y. \Phi(\overline{X},Y) \vdashkm \Phi(\overline{X},T)\text{ and }\Phi(\overline{X},T) \vdashkm \pite Y. \Phi(\overline{X},Y)$$
are derivable.
\end{corollary}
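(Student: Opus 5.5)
The corollary follows by combining item~4 of Theorem~\ref{thm:feree-shillito-quantifiers} with closure of \textbf{KM}-derivability under substitution. The point is that $Y$ is a genuine propositional variable in $\Phi(\overline{X},Y)$ but does not occur free in $\pita Y.\Phi(\overline{X},Y)$ or in $\pite Y.\Phi(\overline{X},Y)$, by item~1. So substituting $T$ for $Y$ moves the instance without touching the interpolant.

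For the first claim, I start from the sequent $\pita Y.\Phi(\overline{X},Y) \vdashkm \Phi(\overline{X},Y)$ supplied by item~4. I then apply the substitution $[T/Y]$ to this whole derivation. Since $Y$ does not occur in $\pita Y.\Phi(\overline{X},Y)$, the antecedent is unchanged, and the succedent becomes $\Phi(\overline{X},T)$, which is the required sequent. The second claim is symmetric, starting from $\Phi(\overline{X},Y) \vdashkm \pite Y.\Phi(\overline{X},Y)$.

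The one real step is justifying that $\vdashkm$ is closed under uniform substitution of an arbitrary formula $T$ for a propositional variable. I would argue this at the level of the Hilbert-style presentation of \textbf{KM}. Its axioms are schematic: the intuitionistic axioms and the three modal axioms are stated for all formulas $\varphi_1,\varphi_2$. Its rules, modus ponens and (implicitly) nothing else, commute with substitution. Hence, by induction on derivations, if $\Gamma \vdash \psi$ then $\Gamma[T/Y] \vdash \psi[T/Y]$.

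Because \textbf{G4KM} is sound and complete for \textbf{KM} (F{\'e}r{\'e}e--Shillito), derivability in the sequent calculus is the same as derivability in \textbf{KM}, so the substitution property transfers to $\vdashkm$. The language of \textbf{KM} has no binders, so "capture-avoiding" imposes no side condition here. Moreover, $T$ may contain $\overline{X}$, $Y$ or other variables freely, which is consistent with the corollary placing no restriction on the variables of $T$.

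The expected obstacle is the transfer just described. Proving admissibility of substitution directly in the contraction-free calculus \textbf{G4KM}, whose rules look at atoms specifically, would be awkward, since substituting a compound formula for an atom can break rule instances. I would therefore avoid that route and go through the equivalence with the axiomatic system.
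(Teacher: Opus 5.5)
Your argument is correct and is the intended one. The paper states this corollary without proof, and the evident derivation from Theorem~\ref{thm:feree-shillito-quantifiers} is exactly clause~4 followed by the substitution $[T/Y]$, with clause~1 guaranteeing that $\pita Y.\Phi(\overline{X},Y)$ and $\pite Y.\Phi(\overline{X},Y)$ are untouched. Getting closure under substitution from the Hilbert presentation rather than from \textbf{G4KM} directly is the right choice, and the argument does not depend on the exact presentation of \textbf{KM}: if it includes necessitation (derivable anyway from $\varphi_1 \rightarrow \square\varphi_1$) or an explicit substitution rule, those rules also commute with $[T/Y]$.
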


\begin{definition}\label{def:kappa-formula-translation}
Define a translation $(-)^\kappa$ between \textbf{LJ2}-formulas and \textbf{KM}-formulas by recursion on formula structure using the following cases:
\begin{enumerate}
    \item $X^\kappa := X$ for any propositional variable $X$,
    \item $\bot^\kappa := \bot$,
    \item $(\varphi_1 \wedge \varphi_2)^\kappa := \varphi_1^\kappa \wedge \varphi_2^\kappa$,
    \item $(\varphi_1 \vee \varphi_2)^\kappa := \varphi_1^\kappa \vee \varphi_2^\kappa$,
    \item $(\varphi_1 \rightarrow \varphi_2)^\kappa := \varphi_1^\kappa \rightarrow \varphi_2^\kappa$,
    \item $(\forall Y. \varphi_1)^\kappa :=\  \pita Y. \varphi_1^\kappa$, and
    \item $(\exists Y. \varphi_1)^\kappa :=\  \pite Y. \varphi_1^\kappa$.
\end{enumerate}
We call this the \textit{$\kappa$-translation}.
\end{definition}

\begin{body}
An important feature of \textbf{KM}, not shared with modal logic \textbf{K}, is the derivability of modal congruuence,
$$ (\varphi_1 \leftrightarrow \varphi_2), \square\varphi_1 \vdashkm \square\varphi_2.$$
for all modal formulas $\varphi_1, \varphi_2$. Since the other connectives are extensional, this allows one to substitute equivalent formulas for equivalent formulas.
\end{body}

\begin{lemma}\label{lem:kappa-substitution}
The $\kappa$-translation introduces no new free variables. Moreover, after renaming bound variables, the translation commutes with substitution up to provable equivalence. In other words, we have
$$(\varphi[T/Y])^\kappa \vdashkm \varphi^\kappa[T^\kappa/Y]$$
and
$$\varphi^\kappa[T^\kappa/Y] \vdashkm (\varphi[T/Y])^\kappa$$
as well.
\begin{proof}
Follows the proof of Lemma~8~in~\cite{pitts-quantifiers}, using modal congruence.
\end{proof}
\end{lemma}

\begin{proposition}\label{prop:kappa-proof-translation}
The formula translation of Definition~\ref{def:kappa-formula-translation} extends into a translation of derivations from \textbf{LJ2} to \textbf{G4KM}. In particular, let $\Gamma$ denote a set of \textbf{LJ2}-formulas, and $\varphi$ a single \textbf{LJ2}-formula. If $\Gamma \vdash_2 \varphi$, then $\Gamma^\kappa \vdashkm \varphi^\kappa$.
\begin{proof}
If $\Gamma \vdash_2 \varphi$, we can find some finite sequence $\Lambda$ so that $\forall \lambda \in \Lambda. \lambda \in \Gamma$ and $\Lambda \vdash_2 \varphi$ has an \textbf{LJ2} proof tree. It's enough to establish the theorem for derivations of this form. We induct on the last rule of \textbf{LJ2} proof trees. After proving the rules of \textbf{LJ} admissible for \textbf{G4KM}, the structural and connective cases are routinie. We deal with the quantifier cases below.

\vspace{0.5em} \textbf{Universal left rule.} In this case we have an \textbf{LJ2} proof tree with last rule
$$\dfrac{\Lambda, \varphi[T/Y] \vdash_2 \psi}{\Lambda, \forall Y.\varphi \vdash_2 \psi}\forall L$$
and by induction hypothesis we have a \textbf{G4KM} proof tree with conclusion $$\Lambda^\kappa, (\varphi[T/Y])^\kappa \vdashkm \psi^\kappa.$$
By Lemma~\ref{lem:kappa-substitution} above, $\varphi^\kappa[T^\kappa/Y] \vdashkm (\varphi[T/Y])^\kappa$, so by cut we obtain $\Lambda^\kappa, \varphi^\kappa[T^\kappa/Y] \vdashkm \psi^\kappa$. Use the admissibility of weakening to obtain a proof of $$\Lambda^\kappa, \pita Y.\varphi, \varphi^\kappa[T^\kappa/Y] \vdashkm \psi^\kappa.$$ Cut this against $\pita Y.\varphi^\kappa \vdashkm \varphi^\kappa[T^\kappa/Y]$ (obtained from Corollary~\ref{cor:feree-shillito-quantifiers}) to construct a proof tree for $\Lambda^\kappa, \pita Y.\varphi^\kappa \vdashkm \psi^\kappa$, and hence $\Lambda^\kappa, (\forall Y.\varphi)^\kappa \vdashkm \psi^\kappa$.

\vspace{0.5em} \textbf{Existential right rule.} Similar to (but simpler than) the universal left rule.

\vspace{0.5em} \textbf{Universal right rule.} In this case $\varphi$ has the form $\forall X. \psi$. Our \textbf{LJ2} proof tree has last rule
$$\dfrac{\Lambda \vdash_2 \psi}{\Lambda \vdash_2 \forall Y. \psi}$$
and so we know that $Y$ does not occur free in $\Lambda$. The induction hypothesis gives $\Lambda^\kappa \vdashkm \psi^\kappa$, and since the $\kappa$-translation preserves free variables, $Y$ does not occur in $\Lambda^\kappa$ at all. Thus, by clause 3 of Theorem~\ref{thm:feree-shillito-quantifiers}, $\Lambda^\kappa \vdashkm \psi^\kappa$ holds precisely if $\Lambda^\kappa \vdashkm \pita Y. \psi^\kappa$ does. By definition of $\kappa$, this means $\Lambda^\kappa \vdashkm (\forall Y. \psi)^\kappa$ as required.

\vspace{0.5em} \textbf{Existential left rule.} The \textbf{LJ2} proof tree ends with the rule
$$\dfrac{\Lambda, \varphi \vdash_2 \psi}{\Gamma, \exists Y. \varphi \vdash_2 \psi}\exists L$$
and the induction hypothesis gives us $\Lambda^\kappa, \varphi^\kappa \vdashkm \psi^\kappa$ and thus that $Y$ does not appear free in $\Gamma, \Psi$. Use this to obtain a proof of
$$\varphi^\kappa \vdashkm \bigwedge_{\lambda \in \Lambda^\kappa} \lambda \rightarrow \psi^\kappa.$$
Since the right-hand side is $Y$-free, it follows from clause 2 of Theorem~\ref{thm:feree-shillito-quantifiers} that $$\pite Y.\varphi^\kappa \vdashkm \bigwedge_{\lambda \in \Lambda^\kappa} \lambda \rightarrow \psi^\kappa$$ as well, and by cutting against $$\Lambda^\kappa, \bigwedge_{\lambda \in \Lambda^\kappa} \lambda \rightarrow \psi^\kappa \vdashkm \psi^\kappa,$$
we conclude $\Lambda^\kappa, \pite Y.\varphi^\kappa \vdashkm \psi^\kappa$ and therefore $\Lambda, (\exists Y.\varphi)^\kappa \vdashkm \psi^\kappa$ as required.
\end{proof}
\end{proposition}

\begin{lemma}\label{lem:kappa-translations-of-t-formulas}
The $\kappa$-translation of second-order formula $\Delta \varphi$ is equivalent to $\square \varphi^\kappa$, whereas the $\kappa$-translation of the second-order formula $M$ is equivalent to $\top$.
\end{lemma}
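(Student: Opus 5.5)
The plan has two parts, one for each claim, and both rely only on the characterising clauses of Theorem~\ref{thm:feree-shillito-quantifiers} and on the \textbf{KM} axioms.

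\textbf{The translation of $\Delta\varphi$.} By Definition~\ref{def:kappa-formula-translation}, $(\Delta\varphi)^\kappa$ is $\pita Y.\,(Y \vee (Y \rightarrow \varphi^\kappa))$, where $Y$ does not occur in $\varphi^\kappa$; this uses Lemma~\ref{lem:kappa-substitution}, which says the translation adds no free variables.

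For the direction $\square\varphi^\kappa \vdashkm (\Delta\varphi)^\kappa$, take $\Psi := \square\varphi^\kappa$, which is $Y$-free. The third \textbf{KM} axiom with $\varphi_2 := Y$ gives $\square\varphi^\kappa \vdashkm Y \vee (Y\rightarrow\varphi^\kappa)$. Clause~3 of Theorem~\ref{thm:feree-shillito-quantifiers} then turns this into $\square\varphi^\kappa \vdashkm \pita Y.(Y\vee(Y\rightarrow\varphi^\kappa))$.

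For the converse, Corollary~\ref{cor:feree-shillito-quantifiers} lets us instantiate $Y$ with $T := \square\varphi^\kappa$. This yields $(\Delta\varphi)^\kappa \vdashkm \square\varphi^\kappa \vee (\square\varphi^\kappa \rightarrow \varphi^\kappa)$. It remains to show $\square\varphi^\kappa \vee (\square\varphi^\kappa\rightarrow\varphi^\kappa) \vdashkm \square\varphi^\kappa$, by cases:
\begin{itemize}
\item the first disjunct is immediate;
\item in the second case, the Löb-style axiom $(\square\varphi_1\rightarrow\varphi_1)\rightarrow\varphi_1$ gives $\varphi^\kappa$, and the axiom $\varphi_1\rightarrow\square\varphi_1$ then gives $\square\varphi^\kappa$.
\end{itemize}

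\textbf{The translation of $M$.} Here $M^\kappa$ is $\pita X.\,((\Delta X)^\kappa \rightarrow X)\rightarrow X$. Using modal congruence together with the first part (applied to the variable $X$, so $(\Delta X)^\kappa \leftrightarrow \square X$), this is equivalent to $\pita X.\,((\square X\rightarrow X)\rightarrow X)$.

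One direction needs care. Since the $\pita$-formula is built from a $\Phi$ that is only provably equivalent to $((\square X\rightarrow X)\rightarrow X)$, I would first check that clause~3 respects this replacement. This holds because both characterisations (the original $\Phi$ and its equivalent) pick out the same $Y$-free lower bounds. With that in hand, $\top \vdashkm (\square X\rightarrow X)\rightarrow X$ is an axiom instance, so clause~3 gives $\top\vdashkm M^\kappa$. The other direction, $M^\kappa\vdashkm\top$, is trivial.

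\textbf{Main obstacle.} The step needing most care is this bookkeeping: passing from $(\Delta X)^\kappa$ to $\square X$ inside a $\pita$. I would handle it by proving once that $\pita$ and $\pite$ respect provable equivalence of their bodies. That follows directly from clauses~3 and~4 (respectively 2 and 4): each side entails the other's body, and hence its uniform interpolant.
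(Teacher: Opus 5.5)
Your proof is correct. The paper states this lemma without proof (it sits in the sketch subsection, where proofs are omitted), so there is no argument of the paper's to compare against. What you wrote is the natural way to fill the gap:
\begin{itemize}
\item Axiom~3 with $\varphi_2 := Y$, followed by clause~3, gives $\square\varphi^\kappa \vdashkm (\Delta\varphi)^\kappa$.
\item Instantiating $Y$ with $T := \square\varphi^\kappa$ via Corollary~\ref{cor:feree-shillito-quantifiers}, then using axioms~2 and~1 on the second disjunct, gives the converse.
\end{itemize}
Your uses of clause~3 are legitimate. The left-hand formulas $\square\varphi^\kappa$ and $\top$ are free of the quantified variable, since the translation adds no free variables.

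One simplification: the step you flag as the main obstacle is not actually needed. To obtain $\top \vdashkm M^\kappa$, apply clause~3 directly to the actual body $\Phi(X) = ((\Delta X)^\kappa \rightarrow X) \rightarrow X$ with $\overline{X}$ empty. You then only need $\vdashkm \Phi(X)$, which follows by purely intuitionistic reasoning:
\begin{itemize}
\item Assume $(\Delta X)^\kappa \rightarrow X$.
\item From $\square X$ you get $(\Delta X)^\kappa$ by the first part, hence $X$; so $\square X \rightarrow X$ holds.
\item The axiom instance $(\square X \rightarrow X) \rightarrow X$ then yields $X$.
\end{itemize}
This uses only the direction $\square X \vdashkm (\Delta X)^\kappa$. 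It also needs no modal congruence, because $(\Delta X)^\kappa$ occurs in $\Phi(X)$ only as the antecedent of an implication, never under $\square$.

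Your general observation is true, and your derivation of it from clauses~1--4 is right: $\pita$ and $\pite$ respect provable equivalence of their bodies, provided $\overline{X}$ is taken to cover the variables of both bodies. It just isn't required for this lemma.
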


\begin{proof}[Alternative proof of Lemma~\ref{lem:free-nabla-not-derivable}]
If $\mathcal{T} \vdash_2 P_0$, then a finite prefix of $\mathcal{T}$ suffices to derive $P_0$. Passing through the $\kappa$-translation and applying Lemma¬\ref{lem:kappa-translations-of-t-formulas}, we conclude that there exists some $N \in \mathbb{N}$ such that
$$\top, \square P_1\rightarrow P_0,\dots, \square P_N\rightarrow P_{N-1} \vdashkm P_0.$$

Substituting $\square^{N-i}\bot$ for each $P_i$ turns the antecedent into identity implications and the conclusion into $\square^N\bot$. Thus, if one had  $\mathcal{T} \vdash_2 P_0$,  one would have a derivation of $\vdashkm \square^N \bot$. But one can see directly, by analyzing the possible cut-free proofs in the calculus \textbf{G4KM}, that $\vdashkm \square^N \bot$ is not provable for any $N \in \mathbb{N}$.
\end{proof}

\begin{body}
Since proving Proposition~\ref{prop:nabla-characterization-upsets} requires the law of excluded middle, the original proof of Lemma~\ref{lem:kappa-translations-of-t-formulas} does not go through in a constructive metatheory as written. As usual, the proof-theoretic approach is more resilient to changes in metatheory, and the alternative proof above does yield a valid proof of Theorem~\ref{thm:cha-not-complete} in any sufficiently strong constructive metatheory (such as IZF).
\end{body}

\paragraph{Acknowledgments.} It is a particular pleasure to have used the point-free coderivative in this article: I met Harold Simmons several times when I was a student in Manchester, and remember him fondly. I would also like to thank Vincent Jackson for helpful discussions and suggestions.

\bibliography{biblio}
\bibliographystyle{plainurl}

\end{document}